\documentclass[12pt, reqno]{amsart}
\usepackage[utf8]{inputenc}
\usepackage[english]{babel}
\usepackage{standalone}

\usepackage{ifthen}
\usepackage{microtype}
\usepackage[dvipsnames]{xcolor}
\usepackage{comment}
\usepackage{blkarray}
\usepackage{fullpage}
\usepackage{mathtools}
\usepackage{xfrac}
\usepackage{shuffle}
\usepackage{centernot}
\usepackage{enumitem}
\usepackage{mathdots}
\usepackage{multirow}
\usepackage{amssymb}
\usepackage{soul}
\usepackage{booktabs}
\usepackage{diagbox}
\usepackage{multicol}
\usepackage[noadjust]{cite}

\usepackage{amsmath}

\usepackage{algorithm}
\usepackage[noend]{algpseudocode}

\newlist{paraenum}{enumerate}{1}
\setlist[paraenum]{wide, label=(\arabic*)}

\makeatletter
\patchcmd{\@setaddresses}{\indent}{\noindent}{}{}
\patchcmd{\@setaddresses}{\indent}{\noindent}{}{}
\patchcmd{\@setaddresses}{\indent}{\noindent}{}{}
\patchcmd{\@setaddresses}{\indent}{\noindent}{}{}
\makeatother

\usepackage{caption}
\usepackage{subcaption}
\usepackage{tikz}    
\usetikzlibrary{cd}
\usetikzlibrary{backgrounds}
\usetikzlibrary{shapes}
\usetikzlibrary{arrows}
\usetikzlibrary{positioning}
\usetikzlibrary{calc}
\usetikzlibrary{quotes}
\usetikzlibrary{fit}
\usetikzlibrary{decorations.pathreplacing}

\usepackage[pdftex]{hyperref}
\hypersetup{
    colorlinks=true,
    linkcolor={magenta},
    citecolor={blue},
    urlcolor={blue}
}
\usepackage[capitalise]{cleveref}

\numberwithin{equation}{section}
\theoremstyle{plain}
\newtheorem{theorem}{Theorem}[section]
\newtheorem{conjecture}[theorem]{Conjecture}

\newtheorem{lemma}[theorem]{Lemma}
\newtheorem{proposition}[theorem]{Proposition}

\theoremstyle{definition}
\newtheorem{remark}[theorem]{Remark}

\newtheorem{examplex}[theorem]{Example}
\newenvironment{example}
  {\pushQED{\qed}\examplex}
  {\popQED\endexamplex}

\newtheorem{definition}[theorem]{Definition}

\newcommand{\rr}{\mathbb{R}}

\newcommand{\pp}{\mathbb{P}}

\newcommand{\co}{\mathcal{O}}

\newcommand{\cm}{\mathcal{M}}

\DeclareMathOperator{\image}{image}

\newcommand{\pt}{\mathrm{PT}}

\newcommand{\gr}{\mathrm{Gr}}

\DeclareMathOperator{\Proj}{Proj}

\DeclareMathOperator{\bP}{\mathbb{P}}
\DeclareMathOperator{\bC}{\mathbb{C}}
\DeclareMathOperator{\bR}{\mathbb{R}}
\DeclareMathOperator{\cO}{\mathcal{O}}
\DeclareMathOperator{\Gr}{\mathrm{Gr}}
\DeclareMathOperator{\cA}{\mathcal{A}}

\newcommand{\reg}{\operatorname{reg}}

\definecolor{benpurple}{RGB}{180, 0, 240}

\title{Log Canonical Models and Positive Geometries}

\author{Benjamin Hollering}
\address{Max Planck Institute for Mathematics in the Sciences, Inselstra{\ss}e 22, 04103 Leipzig, Germany}
\email{benjamin.hollering@mis.mpg.de}

\author{Dmitrii Pavlov}
\address{Max Planck Instiute for Physics, Boltzmannstra{\ss}e 8, 85748 Garching bei M\"unchen, Germany}
\email{pavlov@mpp.mpg.de}

\author{Elizabeth Pratt}
\address{Perimeter Institute for Theoretical Physics,  Waterloo, Ontario, Canada}
\email{lpratt@perimeterinstitute.ca}

\subjclass{Primary: 14E25. Secondary: 14Q15, 05E14, 14E30.}
\begin{document}

\maketitle
\begin{abstract}
    Constructing log canonical compactifications of open varieties is a central problem in birational geometry. Finding a natural coordinate system and obtaining the equations of these models is difficult in general. We show that for a large class of varieties explicit coordinates for the log canonical model are provided by canonical forms of positive geometries, and use this to compute the equations of these models. Our theory applies, for instance, to complements of hyperplane arrangements, cubic surfaces with lines removed, and the moduli space of marked cubic del Pezzo surfaces.
\end{abstract}

\section{Introduction}
\label{sec:intro}
A central problem in birational geometry is to construct compactifications of open varieties such that the boundary of the compactification has prescribed properties (e.g., only has mild singularities). There are several strategies for doing this, including tropical compactifications \cite{tevelev2007tori} and Chow quotients \cite{kapranov1993chow}. Given a compactification, one may obtain another compactification by blowing up singular points or contracting divisors, so that there are many possible compactifications of a variety depending on the types of boundary singularities one allows. In this article, we concentrate on the \emph{log canonical compactification}. It is intrinsic to the open variety and plays a central role in the minimal model program. We present a new strategy to obtain explicit embeddings of such compactifications, using tools from positive geometry \cite{whatisPG}, a novel field of mathematics inspired by theoretical physics.

We now define these compactifications. The necessary technical background, including the definition of log canonical singularities, is provided in Section \ref{sec:prelim-birational}.
 For a closed subvariety $Z$ of a variety $X,$ we use $Z_{\rm{red}}$ to denote the closed subscheme on $Z$ with the reduced structure.

\begin{definition}\label{def:logcanonical}
Let $U$ be a smooth complex algebraic variety. A \emph{log canonical compactification} of $U$ is a proper variety $X$ and an open immersion
\[
j : U \hookrightarrow X
\]
such that
\(
D := (X \setminus U)_{\mathrm{red}}
\) is pure of codimension one,
the pair $(X,D)$ has log canonical singularities and $K_X+D$ is an ample Cartier divisor.
\end{definition}

If such a compactification exists, it is unique (Proposition \ref{prop:lc unique}). However, it is in general difficult to prove that it exists. It is also difficult to find natural coordinates and explicit equations for log canonical compactifications. For complements of hyperplane arrangements, this problem was treated in \cite{tevelev2006hyperplanes}. The specific case of the moduli space $\mathcal{M}_{0,n}$ is the topic of \cite{KeelTevelevM0n,Monin2017,Gillespie2022}. For the moduli space of marked cubic del Pezzo surfaces, the log canonical compactification was described in \cite{HKTdelPezzo} but, to the best of our knowledge, explicit equations are not known. For a description of the log canonical compactifications of configuration spaces of six, seven and eight points in the projective plane $\mathbb{P}^2$, see \cite{luxton2008thesis}, \cite{corey2021initial} and \cite{corey2023x38}, respectively.

In this article, we present a new approach to obtaining natural coordinates and defining equations of log canonical compactifications of a large class of varieties. 
This approach rests on the framework of \emph{positive geometries} \cite{Arkani_Hamed_2017} introduced by Arkani-Hamed--Bai--Lam.
Positive geometries are mathematical objects that play a significant role in theoretical particle physics. 
A positive geometry is a pair of a complex algebraic variety $X$ defined over $\rr$ and a semialgebraic set $X_{\geq 0}\subset X_{\mathbb{R}}$ inside the real locus of $X$, equipped with a rational differential form $\omega_{(X,X_{\geq0})}$ on $X$, called the \emph{canonical form}, satisfying a number of technical properties.
We present a complete definition in Section \ref{sec:prelim-posgeom}. 

A key feature of positive geometries is that the construction of canonical forms is often algorithmic once the boundary of $X_{\geq 0}$ is understood. When the semialgebraic set $X_{\geq 0}$ is the closure of a connected component of an open variety $(X\setminus Y)_{\mathbb{R}}$, its canonical form is an element of $H^0(X,K_X +Y)$. In favorable situations, the canonical forms associated with such connected components span $H^0(X,K_X +Y)$ and, moreover, parametrize the log canonical compactification. The main result of this article, Theorem \ref{thm:mainnew}, describes the conditions under which this is the case. Our approach relies on the novel framework of \cite{brown-dupont} connecting positive geometries and mixed Hodge theory.
We note that the class of varieties satisfying the conditions of Theorem \ref{thm:mainnew} includes all of the varieties mentioned above, and we compute the equations of their log canonical compactifications. The code used to perform these computations is available at \cite{zenodo}.

The remainder of this article is organized as follows. In Section \ref{sec:prelim} we provide the necessary definitions from birational geometry and review the machinery of positive geometries. In Section \ref{sec:main}, we prove Theorem \ref{thm:mainnew}, which describes when the log canonical compactification of an open variety can be embedded by canonical forms. This result unifies and generalizes several instances of this phenomenon observed in prior work such as \cite{parketaylorvar, cubicsurfaces}. In the remaining sections, we describe how these previous results can be seen as instances of Theorem \ref{thm:mainnew} as well as introduce some new families whose log canonical compactifications can also be obtained from our framework. 

In Section \ref{sec:hyperplanes}, we review the construction of log canonical compactifications of hyperplane arrangement complements due to Hacking--Keel--Tevelev \cite{tevelev2006hyperplanes} and show that the compactification given by canonical forms yields the same variety. We also show it is defined by quadrics scheme-theoretically. Section \ref{sec:nonlinear} is devoted to nonlinear examples. We compute the log canonical compactification of a cubic surface with its $27$ lines removed. For the configuration space $\mathcal{X}(3,6)$ of $6$ points in general position in $\mathbb{P}^2$ and the closely related moduli space of marked cubic del Pezzo surfaces $\mathcal{Y}(3,6)$ we describe parametrizations of the log canonical compactifications and compute all quadratic polynomials vanishing on these compactifications. Numerical evidence suggests that the corresponding ideals are quadratically generated. We also briefly discuss the implications of our approach for the spaces $\mathcal{X}(3,7)$ and $\mathcal{Y}(3,7)$, and for $\cm_{0,n}$ via Parke--Taylor varieties \cite{parketaylorvar}.

\section{Preliminaries}
\label{sec:prelim}

\subsection{Preliminaries on Birational Geometry}\label{sec:prelim-birational}
In this section we give the background on birational geometry that is necessary to explain Definition \ref{def:logcanonical}. We prove the uniqueness of the log canonical compactification in Proposition \ref{prop:lc unique}, as we could not find it in the literature in the form we required. 

We first fix some conventions. All our varieties will be complex and normal. In particular, a Cartier divisor determines a unique Weil divisor. When we speak of the (Weil) divisor associated with a codimension one subvariety, we take each component with multiplicity one.

For a proper birational map $f: Y \to X$ of varieties and a subvariety $Z \subset X$ we use $f_*^{-1}(Z)$ to denote the proper transform of $Z$. That is, let  $f^{-1}$ be the birational inverse of $f$ and $Z_0$ be the intersection of $Z$ with the domain of definition of $f^{-1}.$ Then $f_*^{-1}(Z)$ is defined as the Zariski closure of $f^{-1}(Z_0)$ in $Y.$ For a sum $D = \sum_i a_i D_i$ of prime divisors we define $(f_*)^{-1}D := \sum_i a_i (f_*)^{-1} D_i.$

It is convenient to allow compactifications that may be singular. The class of allowed singularities in a log canonical compactification is called \emph{log canonical}. For more background on birational geometry and different types of singularities, see \cite[Chapter Two]{Kollar}.

\begin{definition}[Log canonical singularities]\label{def:lc singularities}

Let $(X,D)$ be a pair where $X$ is a normal variety, $D$ is a sum of distinct prime divisors, and $K_X + D$ is $\mathbb{Q}$-Cartier. 

Suppose $Y$ is another normal variety  and $f: Y \to X$ is birational with exceptional locus $\rm{Ex}(f)$ Then there exists $m$ such that $m(K_X + D)$ is Cartier, and $ma(E,f)$ are integers defined by the pullback formula
\[\cO_Y(m(K_Y + f_*^{-1}D)) \cong f^*\cO_X(m(K_X + D))(\sum_{E \in \rm{Ex}(f)} ma(E,f) \cdot E),\]
where $E$ runs over the irreducible components of $\mathrm{Ex}(f)$.
We define the \emph{discrepancy} of the pair $(X,D)$ to be the infimum of $a(E,f)$ as $E$ runs over all exceptional divisors of all such maps. We say $(X,D)$ has \emph{log canonical singularities} if it has discrepancy at least $-1.$
\end{definition}

It is common in the birational geometry literature to begin not with an open variety $U,$ but rather with a pair $(X,D),$ and ask for a log canonical model of the pair. As such, we could not find a result on the uniqueness of the log canonical compactification in the literature in the form we required, so we provide a proof ourselves. It closely follows the proof of the uniqueness of the log canonical model of a pair $(X,D)$ given in \cite[Theorem 3.52]{Kollar}. 

\begin{proposition}\label{prop:lc unique}
    Let $U$ be a smooth complex algebraic variety. Suppose that $j_1: U \hookrightarrow X_1$ and $j_2: U \hookrightarrow X_2$ are two log canonical compactifications of $U$ with reduced boundary divisors $D_1 = (X_1 \setminus U)_{\rm{red}}$ and $D_2 = (X_2 \setminus U)_{\rm{red}}.$ Then $X_1 \cong X_2$ and $D_1 \cong D_2.$
\end{proposition}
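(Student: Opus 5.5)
We follow the proof of \cite[Theorem 3.52]{Kollar}: we show that both compactifications are $\Proj$ of the same graded ring, namely the log canonical ring computed on a common resolution. The composite $j_2\circ j_1^{-1}$ is a birational map $\phi: X_1 \dashrightarrow X_2$ that restricts to the identity on $U$. Take a resolution of the graph of $\phi$: a smooth proper variety $Y$ with proper birational morphisms $f_1: Y\to X_1$ and $f_2: Y \to X_2$ such that $f_2 = \phi\circ f_1$, both are isomorphisms over $U$, and $E := (Y\setminus U)_{\rm red}$ is a simple normal crossings divisor. Since $U$ is identified in all three varieties, $E$ consists of $(f_i)_*^{-1}D_i$ together with all $f_i$-exceptional divisors, for each $i$. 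This is the key point: every exceptional divisor of $f_i$ lies over $X_i\setminus U$ and so appears in $E$ with coefficient one.

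The main step is to prove that for each $i$ and every $m$ with $m(K_{X_i}+D_i)$ Cartier,
\[
H^0\big(Y, \cO_Y(m(K_Y+E))\big) \cong H^0\big(X_i, \cO_{X_i}(m(K_{X_i}+D_i))\big).
\]
By Definition~\ref{def:lc singularities} we can write
\[
m(K_Y + E) = f_i^*\,m(K_{X_i}+D_i) + \sum_{F\in \mathrm{Ex}(f_i)} m\big(a(F,f_i)+1\big)F,
\]
because $E = (f_i)_*^{-1}D_i + \sum F$. Log canonicity gives $a(F,f_i)\ge -1$, so the correction term $G_i := \sum m(a(F,f_i)+1)F$ is effective and $f_i$-exceptional. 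Since $X_i$ is normal, $(f_i)_*\cO_Y(f_i^*L + G_i) = \cO_{X_i}(L)$ for any Cartier divisor $L$ and any effective exceptional $G_i$: sections over $Y\setminus \mathrm{Ex}$ extend over codimension-$\geq 2$ loci by Hartogs. Taking global sections gives the claimed isomorphism. I expect this to be the main obstacle, mostly bookkeeping: keeping track of the coefficient of $E$ on the exceptional divisors, and checking that every exceptional divisor really lies in the boundary. That holds because $f_i$ is an isomorphism over $U$.

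It follows that the graded rings $R(Y,K_Y+E)^{(m)} = \bigoplus_k H^0(Y, km(K_Y+E))$ and $\bigoplus_k H^0(X_i, km(K_{X_i}+D_i))$ coincide for each $i$, taking $m$ to be a common index that works for both $i=1,2$. Since $K_{X_i}+D_i$ is ample, $X_i \cong \Proj \bigoplus_k H^0(X_i, km(K_{X_i}+D_i))$. Hence $X_1 \cong \Proj R(Y,K_Y+E)^{(m)} \cong X_2$. Moreover, this isomorphism is induced by $\phi$: the identifications are pullbacks along $f_1$ and $f_2$ of rational sections, compatible on $U$. So $\phi$ extends to an isomorphism that is the identity on $U$, and it therefore carries $D_1 = (X_1\setminus U)_{\rm red}$ onto $D_2 = (X_2\setminus U)_{\rm red}$.
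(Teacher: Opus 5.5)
Your proposal is correct and follows essentially the same route as the paper. Both take a common resolution $Y$ on which every exceptional divisor lies in the boundary, use log canonicity to rewrite the discrepancy formula as $K_Y+E=f_i^*(K_{X_i}+D_i)+G_i$ with $G_i$ effective and exceptional, identify sections via $(f_i)_*\mathcal{O}_Y(G_i)=\mathcal{O}_{X_i}$ and the projection formula, and recover each $X_i$ as $\Proj$ of the common ring, with the isomorphism induced by $\phi$; your passage to a multiple $m$ and a Veronese subring is harmless but unnecessary, since $K_{X_i}+D_i$ is Cartier by definition.
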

\begin{proof}
    Let $\phi: X_1 \dashrightarrow X_2$ be the rational map sending $j_1(U)$ to $j_2(U).$ We begin by using Hironaka's resolution of singularities \cite{hironaka} to find a common resolution 
    \[
\begin{tikzcd}
& Y \arrow[dl, "p_1"'] \arrow[dr, "p_2"] & \\
X_1 \arrow[rr, dashed, "\phi"] && X_2.
\end{tikzcd}
\]
Indeed, by Hironaka's theorem, there exists a smooth variety $Y$ and maps $p_1, p_2$ which are isomorphisms over $U$ such that $\Delta := Y \setminus U$ is a closed subset of codimension one; see also \cite[Theorem 0.2]{Kollar} for the precise formulation of Hironaka's theorem used. Then as subsets $\Delta = \mathrm{Ex}(p_i) \cup (p_i)_*^{-1}(D_i)$ and as Weil divisors $\Delta = \mathrm{Ex}(p_i)_{\mathrm{red}} + (p_i)_*^{-1}(D_i)$ for $i = 1, 2,$ discarding components of codimension $\geq 2$, which
do not affect the divisor class.

Now, $K_{X_i} + D_i$ is Cartier, and the variety $Y$ is smooth, so $K_Y + \Delta$ is also Cartier. As in the discrepancy formula of Definition \ref{def:lc singularities}, we may write
\begin{equation}\label{eq:discrepancy}
    \begin{split}
        K_Y + (p_1)_*^{-1}D_1
& =
p_1^*(K_{X_1} + D_1) + \sum_i a(E, p_1)E_i, \\
K_Y + (p_2)_*^{-1}D_2
& =
p_2^*(K_{X_2} + D_2) + \sum_i a(F, p_2)F_i,
    \end{split}
\end{equation}
where the sum is over exceptional divisors of $p_1$ and $p_2$. Now, since both $(X_1,D_1)$ and $(X_2,D_2)$ have log canonical singularities, we can add $\mathrm{Ex}(p_i)_{\rm{red}}$ to each side of \eqref{eq:discrepancy} to obtain 
\begin{equation}\label{eq:discrepancy2}
    \begin{split}
        K_Y + \Delta
& =
p_1^*(K_{X_1} + D_1) + G_1 \\
& =
p_2^*(K_{X_2} + D_2) + G_2,
    \end{split}
\end{equation}
where $G_1$ and $G_2$ are exceptional and effective. But then $(p_i)_*\cO_Y(G_i)=\cO_{X_i}.$ Thus by the projection formula, we have an isomorphism \[
H^0\!\left(Y,\mathcal{O}_Y\bigl(m(K_Y+\Delta)\bigr)\right)
\cong
H^0\!\left(X_i,\mathcal{O}_{X_i}\bigl(m(K_{X_i}+D_i)\bigr)\right),
\qquad i=1,2.
\]
In particular, there is an isomorphism
 \[\bigoplus_{m \geq 0} H^0(X_1, \cO_{X_1} (m(K_{X_1} + D_1))) \cong \bigoplus_{m \geq 0} H^0(X_2, \cO_{X_2}(m(K_{X_2} + D_2))).\]
But since $K_{X_i} + D_i$ is ample, its ring of sections recovers the variety $X_i,$ and thus $X_1 \cong X_2.$ The resulting isomorphism is the one induced by $\phi$, hence restricts to the identity on $U$ and therefore carries $D_1$ to $D_2$.
\end{proof}

The ring constructed in the proof of Proposition \ref{prop:lc unique} is called the \emph{log canonical ring}. It can be obtained from any compactification with at most log canonical singularities.
If the log canonical compactification exists, then it equals Proj of this ring \cite[Theorem 3.52]{Kollar}.

\begin{definition}[Log canonical ring]
    Let $U$ be a smooth complex algebraic variety. Let $(X,D)$ be a  compactification of $U$ with log canonical singularities. The \emph{log canonical ring} is 
    \[R(U) := \bigoplus_{m \geq 0} H^0(X, \cO_X(m(K_X + D))).\]
    If $R(U)$ is finitely generated, then $X_{lc} := \Proj R(U)$ is a projective algebraic variety over $\bC.$
\end{definition}

An invertible sheaf $\mathcal{L}$ on a normal projective variety $X$ is called \emph{big} if, for some $r>0$, the complete linear system $|\mathcal{L}^{\otimes r}|$ defines a birational map from $X$ onto its image. 
    
Suppose that for some $m>0$ the divisor $m(K_X +D)$ is Cartier and $\cO_X(m(K_X +D))$ is big. Then we obtain a birational map 
    \[\phi: X \dashrightarrow X_{lc},\]
    which is regular on $U$. If the open variety $U$ has a log canonical compactification, it equals $(X_{lc}, \phi_* D)$ \cite[Theorem 3.52]{Kollar}. Thus the log canonical ring gives an algorithmic way to access this compactification; one constructs the ring and checks the conditions. 
    
    However, checking the conditions (particularly the finite generation) can be difficult in general. When the variety $U = X$ itself is already smooth and compact, one recovers the \emph{canonical model} of $X$. These exist whenever $X$ is smooth and projective of general type \cite[Corollary 1.1.1]{Birkar2009-pi}. 

When the divisor $K_{X_{lc}} + \phi_* D$ is very ample, the resulting embedding of $X_{lc}$ is called the \emph{log canonical embedding}. In Section \ref{sec:main}, we will show that for pairs $(X,D)$ which are sufficiently nice, we can produce a basis of the degree one part of the log canonical ring via positive geometry, thus yielding this embedding.

\subsection{Preliminaries on Positive Geometries} \label{sec:prelim-posgeom}

Positive geometry is a new field of mathematics inspired by computations in quantum field theory, and by the study of positive parts of algebraic varieties. The ideas of positive geometry have developed significantly over the last decade. Recently, Brown and Dupont \cite{brown-dupont} establish a new mathematical foundation based on Deligne's mixed Hodge theory. We adopt this viewpoint first, as it is closest to birational geometry, and then recall the original framework of \cite{Arkani_Hamed_2017} which is better suited for computation.

The central object in the Brown--Dupont framework is a pair $(X, Y)$ of an $n$-dimensional complex variety $X$ and a closed (usually reducible) subvariety $Y$. The relative homology group $H_n(X,Y)$ carries a mixed Hodge structure by work of Deligne \cite{Deligne2,Deligne3}. Such a pair possesses an important statistic called the \emph{genus}, which generalizes the genus of an algebraic variety. In what follows, we assume the complement $X\setminus Y$ is smooth.

\begin{definition}[{\cite[Def. 3.1]{brown-dupont}}]
    The \emph{genus} of the pair $(X,Y)$ is the sum of the outer Hodge numbers of the relative homology group $H_n(X,Y)$ (with rational coefficients): 
 $$g(X,Y) := \sum_{p>0} h^{-p,0}(H_n(X,Y)).$$
\end{definition}

Let $U$ be the complement $X\setminus Y$. The main theorem of \cite{brown-dupont} states that for every pair $(X,Y)$ of genus zero there is a map 
\begin{equation}\label{eq:canonical map}
    \omega:\ H_n(X,Y)\to \Omega_{\log}^n(U),
\end{equation}
which associates with each homology cycle $\sigma$ a logarithmic form $\omega(\sigma)$ on $U$. This is called the \emph{canonical form} of the cycle. 
Here, $\Omega_{\log}^n(U)$ is the space of logarithmic $n$-forms on $U$, defined below. This is a finite-dimensional vector space which depends only on $U$. It is constructed by taking a smooth simple normal crossing compactification of $U,$ which exist for complex varieties by Hironaka's resolution of singularities \cite{hironaka}. Thus we may suppose that $(X, Y)$ is such  a compactification.

\begin{definition}\label{def:logforms}
Let $X$ be a smooth compact complex variety of dimension $n$, and $Y \subset X$ be a
hypersurface which is a simple normal crossing divisor. Restriction from $X$ to $U:=X\setminus Y$ gives a map \begin{equation}\label{eq:res}
    H^0(X, \Omega_X^n(Y)) \to H^0(U, \Omega^n_U), \quad s \mapsto s|_U.
\end{equation}
We denote its image $\Omega^n_{\log}(U),$ and call it the \emph{space of logarithmic $n$-forms on $U.$}
\end{definition}

The image of the restriction map is independent of the choice of a smooth simple normal crossing compactification \cite[Proposition 1.13]{brown-dupont}, so the space $\Omega_{\log}^n(U)$ is well-defined. That is, if a form is logarithmic on one SNC compactification, then it is logarithmic on every SNC compactification. For a general compactification, the image of the restriction map \eqref{eq:res} contains the subspace of the space of logarithmic forms, but may be larger \cite[Proposition 1.13]{brown-dupont}. The following is an example of a non-SNC compactification in which restrictions of some forms fail to be logarithmic.

\begin{example}\label{eg:lines prelim}
Let $\cA$ be the hyperplane arrangement in $\mathbb{P}^2$ whose real points are depicted in Figure \ref{fig:blowuppq}, where the hyperplanes are the vanishing loci of the forms $x, y, z, x-z, y-z$. Let $U := \bP^2 \setminus \cA.$ Then $\bP^2$ itself is a compactification of $U,$ but the boundary divisor $\cA$ is not simple normal crossing. We denote the two points where three lines meet by $q = [1:0:0]$ and $p = [0:1:0].$  

Let $X = \mathrm{Bl}_{p,q}\bP^2$ denote the blowup at points $p$ and $q$ and let $E_p, E_q$ denote the components of the exceptional divisor of the blowup. Also, let $Y = \tilde{L_1} \cup \ldots \cup \tilde{L_5} \cup E_p \cup E_q$, where $\tilde{L_i}$ is the proper transform of the line $L_i$. Then $(X,Y)$ is an SNC compactification of $U.$
\begin{figure}[!h]
    \centering
    \scalebox{0.8}{\tikzset{every picture/.style={line width=0.75pt}} 

\begin{tikzpicture}[x=0.75pt,y=0.75pt,yscale=-1,xscale=1]

\draw    (437,170) -- (257,170) ;
\draw    (326,50) -- (426.75,200.75) ;
\draw    (366,50) -- (289.75,201.25) ;
\draw    (266,190) -- (426,110) ;
\draw    (406,170) -- (306,170) ;
\draw [shift={(306,170)}, rotate = 180] [color={rgb, 255:red, 0; green, 0; blue, 0 }  ][fill={rgb, 255:red, 0; green, 0; blue, 0 }  ][line width=0.75]      (0, 0) circle [x radius= 3.35, y radius= 3.35]   ;
\draw [shift={(406,170)}, rotate = 180] [color={rgb, 255:red, 0; green, 0; blue, 0 }  ][fill={rgb, 255:red, 0; green, 0; blue, 0 }  ][line width=0.75]      (0, 0) circle [x radius= 3.35, y radius= 3.35]   ;
\draw    (446,190) -- (286,110) ;

\draw (327,195) node   [align=left] {\begin{minipage}[lt]{27.2pt}\setlength\topsep{0pt}
$\displaystyle p$
\end{minipage}};
\draw (396,195) node   [align=left] {\begin{minipage}[lt]{13.6pt}\setlength\topsep{0pt}
$\displaystyle q$
\end{minipage}};

\end{tikzpicture}}
    \caption{A non-simple arrangement of five lines in $\bP^2$}\label{fig:blowuppq}
\end{figure}

A basis for the space $H^0(\bP^2, \Omega^2_{\bP^2}(5H))$ is 
    \[\frac{x^2 \Omega}{F}, \,\frac{xy\Omega}{F},\, \frac{y^2 \Omega}{F}, \,\frac{xz\Omega}{F},\,\frac{yz \Omega}{F}, \,\frac{z^2\Omega}{F},\]
    where $F = xyz(x-z)(y-z)$ and $\Omega = x dy \wedge dz - y dx \wedge dz + z dx \wedge dy$. However, the space $\Omega^n_{\log}(U)$ is a four-dimensional subspace of the space spanned by (restrictions of) these forms. To see this, we note the bundle $\Omega^n_X(Y)$ corresponds to the divisor $2H - E_p-E_q.$ Its sections are the pullbacks of the forms which vanish on both $p$ and $q.$ Thus the forms $\frac{x^2 \Omega}{F}$ and $\frac{y^2 \Omega}{F}$ do not extend to logarithmic forms on the simple normal crossing compactification $X.$ 
\end{example}

Another important notion in this framework is that of \emph{combinatorial rank}. The combinatorial rank $\mathrm{cr}(X,Y)$ is defined as the Hodge number $h^{0,0}(H_n(X,Y))$. When the pair $(X,Y)$ has genus zero, we have $\mathrm{cr}(X,Y) = \dim \Omega_{\log}^n(X\setminus Y)$ \cite[Sec. 4]{brown-dupont}. That is, the combinatorial rank recovers the dimension of the space of logarithmic forms. As the name suggests, this dimension can be computed purely combinatorially, as the dimension of the $n$th homology group of the Delta-complex associated with the divisor $Y$ \cite[Proposition 4.4]{brown-dupont}.

So far, this theory is quite abstract. While the formalism of \cite{brown-dupont} guarantees the existence of canonical forms of complex homology cycles of $H_n(X,Y)$, it provides no explicit recipe to compute them. On the other hand, there is a natural map $H_n(X_{\mathbb{R}}, Y_{\mathbb{R}})\to H_n(X,Y)$. For the real homology cycles that are positive geometries in the sense of \cite{Arkani_Hamed_2017} one can use the original combinatorial approach to construct canonical forms explicitly. We now review the definition of positive geometries from \cite{Arkani_Hamed_2017}. In Section \ref{sec:hyperplanes} we describe the canonical forms of polytopes, which are the simplest examples of positive geometries in projective space.

Let $X$ be an $n$-dimensional irreducible normal complex projective variety defined over $\mathbb{R}$. 
Let $X_{\geq 0}$ be an $n$-dimensional semi-algebraic subset of $X_\mathbb{R}$, the real locus of $X$. 
Suppose the interior $X_{>0}$ of $X_{\geq 0}$ (in the analytic topology on $X_\mathbb{R}$) 
is an oriented manifold, and that its closure recovers $X_{\geq 0}$. 
The \emph{algebraic boundary} $\partial_a X_{\geq 0}$ is the Zariski closure in $X$ of $X_{\geq 0} \setminus X_{>0}$. 
The irreducible components of $\partial_a X_{\geq 0}$ are prime divisors $D_1,\ldots,D_k$ on~$X$. We define $(D_i)_{\geq 0}$ as the closure in $(D_i)_\mathbb{R}$ of the interior of $D_i \cap (X_{\geq 0}\setminus X_{>0})$.

\begin{definition}[Positive geometries as in {\cite[Section 2.1]{Arkani_Hamed_2017}}]\label{def:pos_geom_lam}
The pair $(X, X_{\geq 0})$ is called a \emph{positive geometry} if there exists a unique rational $n$-form $\omega_{(X,X_{\geq 0})}$ on $X$, called the \emph{canonical form}, satisfying the following properties.
\begin{enumerate}
    \item If $n > 0$, the form $\omega_{(X,X_{\geq 0})}$ has poles only along $D_1,\ldots,D_k$. 
    \item For each irreducible component $D_i$ of $\partial_a X_{\geq 0}$, the pair $(D_i,(D_i)_{\geq 0})$ is a positive geometry; $\omega_{(X,X_{\geq 0})}$ has a simple pole along $D_i$ 
    and the Poincaré residue $\operatorname{Res}_{D_i} \omega_{(X,X_{\geq 0})}$ equals the canonical form of 
    $(D_i,(D_i)_{\geq 0})$.
    \item If $n=0$, then $X=X_{\geq 0}$ is a point and $\omega_{(X,X_{\geq 0})} = \pm 1$. 
\end{enumerate}
\end{definition}

\begin{remark} \label{rem:smooth}
    In this paper we will additionally assume $X\setminus \partial_a X_{\geq 0}$ is smooth. In particular, for each boundary component $D_i$ of the positive geometry $(X, X_{\geq 0})$, we have that $D_i\setminus \partial_a(D_i)_{\geq 0}$ is smooth. This will be the case in all the examples we consider.
\end{remark}

The paper \cite{cubicsurfaces} blends the approaches of \cite{Arkani_Hamed_2017} and \cite{brown-dupont} by imposing constraints on the real points of the pair $(X,Y)$ considered by Brown and Dupont, introducing \emph{positive arrangements}. In the present paper all of our examples will be instances of these. Positive arrangements globalize the theory of Arkani-Hamed--Bai--Lam by considering all regions of the complement $X_{\bR} \setminus Y_{\bR},$ rather than a specified non-negative locus $X_{\geq 0}.$ We review \cite[Definition 5.2]{cubicsurfaces}.

\begin{definition}[Positive arrangements]
A positive arrangement is a pair ($X,Y)$ of a complex projective variety $X$ and a hypersurface $Y$ such that
\begin{enumerate}\label{def:positive arrangement}
    \item The variety $X$ has a real smooth point,
    \item Each irreducible component of $Y$ has a real smooth point,
    \item The singular locus $\mathrm{Sing}(X)$ is contained in $Y$, and
    \item The pair $(X,Y)$ has genus zero.
\end{enumerate}
\end{definition}

\begin{example}
    Let $\cA$ be a finite union of real hyperplanes in $\bP^n.$ Then the pair $(\bP^n, \cA)$ is a positive arrangement. It clearly satisfies the first three conditions of Definition \ref{def:positive arrangement}, and the pair has genus zero by \cite[Remark 2.8]{brown-dupont}.
\end{example}

\begin{definition}
    Let $(X,Y)$ be a positive arrangement. A \emph{real region} $\sigma$ of $(X, Y )$ is the closure
of an orientable connected component of the complement $X_{\bR} \setminus Y_{\bR}.$
\end{definition} 
Each region yields an element of the relative homology group $H_n(X,Y).$ Thus the map $\omega$ associates with each region a logarithmic form.  A key feature of this setup is that, even if $(X,Y)$ is not an SNC compactification, the forms obtained as canonical forms of regions are still logarithmic on $X \setminus Y.$  This is formalized in Lemma \ref{lem:canformslogarithmic} and essentially follows from the invariance of the canonical form map under modifications of the pair $(X, Y)$; see \cite[Sec. 2.3.1]{brown-dupont}. We will exploit this fact frequently for computational purposes.

We now explain the connection between the two approaches to positive geometries. A real region $\sigma$ gives a cycle $[\sigma] \in H_n(X_{\bR}, Y_{\bR})$. If the region $\sigma$ gives rise to a positive geometry $(X,\sigma)$ in the sense of \cite{Arkani_Hamed_2017},  it has a canonical form which is unique up to multiplication by a nonzero scalar. The following lemma shows this form agrees with $\omega([\sigma])$ where $\omega$ is the map from Equation \eqref{eq:canonical map}. Thus in later sections, we use $\sigma$ to denote both the region and its corresponding cycle.
In what follows, $X^{\mathrm{reg}}$ denotes the regular locus of a variety $X$. 

\begin{lemma}\label{lem:canformslogarithmic}
    Let $X$ be a normal complex projective variety and $Y$ be a hypersurface such that $X\setminus Y$ is smooth. Let $\sigma$ be a real region of $X \setminus Y$ such that $(X,\sigma)$ is a positive geometry satisfying the conditions of Remark \ref{rem:smooth}. Then the canonical form $\omega_{(X,\sigma)}$ of $(X,\sigma)$ is an element of $\Omega^n_{\log}(U).$ In particular, $\omega_{(X,\sigma)}$ equals $\omega([\sigma])$ up to nonzero scalar. 
\end{lemma}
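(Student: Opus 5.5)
We show first that $\omega_{(X,\sigma)}$ extends to a logarithmic form on a smooth SNC compactification of $U = X\setminus Y$, and then identify it with $\omega([\sigma])$ using the uniqueness built into both constructions.

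\textbf{Reduction to an SNC model.} By Hironaka \cite{hironaka}, choose a proper birational morphism $\pi:\tilde X\to X$ that is an isomorphism over $U$, with $\tilde X$ smooth and $\tilde Y:=\pi^{-1}(Y)$ a simple normal crossing divisor. We must show that $\pi^*\omega_{(X,\sigma)}$ has at most simple poles along every component of $\tilde Y$. Since $(X,\sigma)$ is a positive geometry, $\omega_{(X,\sigma)}$ has poles only along components of $\partial_a\sigma\subset Y$. By Remark \ref{rem:smooth}, the form is regular on $U$. Thus $\pi^*\omega_{(X,\sigma)}$ is a rational top form on $\tilde X$ that is regular on $U$, and its polar divisor is supported on $\tilde Y$. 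The proper transforms of the $D_i$ carry simple poles, because $\pi$ is an isomorphism at their generic points. The work is therefore concentrated on the exceptional divisors $E$ of $\pi$.

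\textbf{The key step: pole order along exceptional divisors.} We argue by induction on $n=\dim X$, using the recursive residue structure of Definition \ref{def:pos_geom_lam}. For an exceptional $E$ with center $Z=\pi(E)\subset \partial_a\sigma$, write the pole order as $\operatorname{ord}_E(\pi^*\omega)=-k$. We want $k\le 1$.

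The approach we would try first is local. Near a general point of $Z$, blow up along $Z$, or work on a toric/SNC local model. There, $\omega$ is a sum of terms whose iterated residues along the flag of boundary strata through $Z$ are canonical forms of lower-dimensional positive geometries. By induction these residues are logarithmic. A form whose residue along each $D_i$ is logarithmic, and which has only simple poles along the $D_i$, has at most a simple pole on any divisor over their intersection. This is the standard characterization of logarithmic forms via residues: a meromorphic form with simple poles whose residues are logarithmic is logarithmic, in the sense of Saito.

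Alternatively, to avoid local computations, we could appeal to \cite[Sec.~2.3.1]{brown-dupont} and \cite[Proposition 1.13]{brown-dupont}. Logarithmicity is tested on any SNC model, and the Brown--Dupont canonical form of $[\sigma]$ is invariant under modification $(\tilde X,\tilde Y)\to(X,Y)$. We expect this exceptional-divisor step to be the main obstacle, since the positive-geometry axioms only control the poles along the $D_i$ themselves.

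\textbf{Identification with $\omega([\sigma])$.} Once $\omega_{(X,\sigma)}\in\Omega^n_{\log}(U)$ is established, compare it with $\omega([\sigma])$. Both forms are logarithmic, and their residues along the boundary components are recursively canonical forms of the facets; at the bottom level they are $\pm1$ on points. Brown--Dupont characterize $\omega([\sigma])$ as the logarithmic form whose (iterated) residues pair correctly with the cycle $[\sigma]$ and its boundary. Hence the difference of the two forms, after normalizing the sign and scalar, is a logarithmic form with vanishing residues along all boundary components. Such a form is regular on $\tilde X$, so it lies in $H^0(\tilde X,\Omega^n)$. That space injects into $h^{-n,0}$-type data, which is zero by the genus zero hypothesis, or which is killed by the uniqueness in Definition \ref{def:pos_geom_lam}. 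Hence the two forms agree up to a nonzero scalar.
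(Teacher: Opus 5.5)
\textbf{There is a genuine gap in the key step.} Your outline matches the paper's proof: induction on $n$; the residue of $\omega_{(X,\sigma)}$ along a boundary component $C$ is $\omega_{(C,C_{\geq 0})}$, which is logarithmic by induction; then a residue criterion; then uniqueness. However, the step you yourself flag as the main obstacle is never proved, and none of your three justifications closes it.

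\begin{itemize}
\item \emph{Saito's theory is the wrong tool.} For top-degree forms, ``logarithmic in the sense of Saito'' just means having at most simple poles along $Y$. That does not imply membership in $\Omega^n_{\log}(U)$. The form $x^2\Omega/F$ of Example~\ref{eg:lines prelim} has only simple poles along the five lines but acquires a double pole along $E_q$ on the blowup.
\item \emph{The local sketch does not reach the problem.} Blowing up a general point of the center $Z$, or passing to a toric/SNC local model, tests only one divisorial valuation over $Z$. An arbitrary resolution has many exceptional divisors with the same center. Moreover, no SNC local model exists precisely where $Y$ fails to be SNC, which is exactly the case at issue.
\item \emph{The appeal to \cite[Sec.~2.3.1, Prop.~1.13]{brown-dupont} is circular as you order things.} Those results concern $\omega([\sigma])$ and the independence of the SNC model. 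They say nothing about $\omega_{(X,\sigma)}$ until you know the two forms coincide, and in your plan that comes afterwards.
\end{itemize}

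\textbf{The missing ingredient} is the residue criterion \cite[Proposition~1.15]{brown-dupont}, which is what the paper invokes: a form regular on $U$ lies in $\Omega^n_{\log}(U)$ as soon as its residue along $Y^{\reg}$ lies in $\Omega^{n-1}_{\log}(Y^{\reg})$.

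Your sentence about forms with logarithmic residues having at most simple poles over the intersections is exactly this statement. It is a genuine theorem, though, not a local triviality. Also, ``logarithmic'' there must mean Brown--Dupont-logarithmic on the open variety $C\cap Y^{\reg}$, so that the residue's behaviour near $C\cap\mathrm{Sing}(Y)$ is controlled. The inductive hypothesis supplies exactly this, after restricting to that open subset.

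With the criterion, no analysis of exceptional divisors is needed. Residues along non-boundary components vanish, residues along boundary components are logarithmic by induction, and the base case is $X=\mathbb{P}^1$.

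\textbf{An alternative repair.} You could instead reverse your order. Use $\mathrm{Res}\,\omega([\sigma])=\omega(\partial[\sigma])$ together with induction to show that $\omega([\sigma])$ satisfies the axioms of Definition~\ref{def:pos_geom_lam}. Then conclude $\omega_{(X,\sigma)}=\omega([\sigma])$ from the uniqueness of the canonical form. This bypasses the criterion, but it needs genus zero and matching normalizations at every level of the recursion.

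\textbf{A smaller issue in the identification step.} A logarithmic form on $\tilde X$ whose residues vanish along the proper transforms of the components of $Y$ is not automatically regular on $\tilde X$, since it may still have log poles along exceptional components. Either apply the uniqueness of Definition~\ref{def:pos_geom_lam} directly on $X$, or cite Brown--Dupont's uniqueness of the logarithmic form with prescribed residue, as the paper does.
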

\begin{proof}
    We proceed by induction on the dimension of $X.$ In the dimension one case, $X$ is normal, hence smooth, of genus zero (since varieties supporting positive geometries cannot have global holomorphic top-forms). Thus $X$ is $\bP^1,$ and all (reduced) point arrangements are simple normal crossing. For the inductive step, by \cite[Proposition 1.15]{brown-dupont} it suffices to check that the residue of $\omega_{(X,\sigma)}$ along $Y^{\mathrm{reg}}$ lies in $\Omega^{n-1}_{\log}(Y^{\reg}).$ If $Y_1, \, \ldots, \, Y_k$ are the irreducible components of $Y$, then we have
    \[H^0(Y^{\reg}, \Omega^{n-1}_{Y^{\reg}}) = \bigoplus_{i = 1}^k H^0(Y_i \cap Y^{\reg}, \Omega^{n-1}_{Y_i \cap Y^{\reg}}).\]
    By the definition of $\omega_{(X,\sigma)},$ its residue along $Y_i \cap Y^{\reg}$ is zero unless $Y_i$ is a boundary component of $X_{\geq 0}.$ Suppose that $C = Y_i$ is a boundary component. By the recursive definition of positive geometries, we have that $\omega_{(C,C_{\geq 0})} := \text{Res}_C(\omega_{(X,\sigma)})$ is the canonical form of $(C, C_{\geq 0}),$ and thus lies in $\Omega^{n-1}_{\log}(C^{\reg})$ by the inductive hypothesis. Since $C^{\reg}$ contains $C \cap Y^{\reg},$ we have that $\omega_{(C,C_{\geq 0})}$ lies in $\Omega^{n-1}_{\log}(C \cap Y^{\reg})$. Thus, $\omega_{(X,\sigma)}\in\Omega^n_{\log}(U)$.

    By the discussion surrounding Equation (4) in \cite{brown-dupont}, $\omega([\sigma])$ is the unique element of $\Omega^n_{\log}(X \setminus Y)$ satisfying the condition 
\begin{equation}\label{eq:rescondition}
    \rm{Res}(\omega([\sigma])) = \omega(\partial 
[\sigma]). 
\end{equation}
Suppose now that $(X,\sigma)$ is a positive geometry in the sense of \cite{Arkani_Hamed_2017}. Then its canonical form $\omega_{(X,\sigma)}$ also satisfies the residue condition \eqref{eq:rescondition}. Since $\omega_{(X,\sigma)}$ lies in $\Omega^n_{\log}(U),$ we have that $\omega_{(X,\sigma)}$ equals $\omega([\sigma])$ up to multiplication by a nonzero scalar.
\end{proof}

Finally, we recall the notion of \emph{real combinatorial rank} $\mathrm{cr}_\mathbb{R}(X,Y)$ of a positive arrangement $(X,Y)$ \cite[Def. 5.10]{cubicsurfaces}. It is defined as the dimension of the $\mathbb{Q}$-vector space spanned by the canonical forms of the real regions of $(X,Y)$. One always has $\mathrm{cr}_{\mathbb{R}}(X,Y)\leq \mathrm{cr}(X,Y)$. The situation in which the equality is attained is favorable from a computational perspective. In this case, the canonical forms of the real regions of $X\setminus Y$ yield a basis of $\Omega_{\log}^n(X\setminus Y)$. 

\begin{example}[Conics in the plane]
    Consider the arrangement of two conics in the projective plane pictured in Figure \ref{fig:twoconics}. 
    \begin{figure}[!h]
        \scalebox{0.7}{\tikzset{every picture/.style={line width=0.75pt}} 

\begin{tikzpicture}[x=0.75pt,y=0.75pt,yscale=-1,xscale=1]

\draw   (180,151.25) .. controls (180,100.85) and (220.85,60) .. (271.25,60) .. controls (321.65,60) and (362.5,100.85) .. (362.5,151.25) .. controls (362.5,201.65) and (321.65,242.5) .. (271.25,242.5) .. controls (220.85,242.5) and (180,201.65) .. (180,151.25) -- cycle ;
\draw   (277.5,151.25) .. controls (277.5,100.85) and (318.35,60) .. (368.75,60) .. controls (419.15,60) and (460,100.85) .. (460,151.25) .. controls (460,201.65) and (419.15,242.5) .. (368.75,242.5) .. controls (318.35,242.5) and (277.5,201.65) .. (277.5,151.25) -- cycle ;

\end{tikzpicture}}
        \caption{Two conics  in the projective plane}\label{fig:twoconics}
    \end{figure}
    In this case $X$ is $\bP^2$ and $Y$ is the union of the two conics. The divisor $Y$ has simple normal crossings, and each real region yields a positive geometry. The log canonical bundle is $\mathcal{O}_{\mathbb{P}^2}(K_{\bP^2}+H),$ where $H$ is the class of a hyperplane. However, the canonical forms of the three real regions are linearly dependent and do not span the full space of global sections $H^0(\bP^2, \mathcal{O}_{\mathbb{P}^2}(K_{\bP^2}+H)),$ which is three-dimensional. In other words, the real combinatorial rank of this pair does not equal its combinatorial rank, see \cite[Ex. 5.11]{cubicsurfaces}. 
\end{example}

\section{Log Canonical Models via Canonical Forms} \label{sec:main}
In this section we will show how to obtain log canonical embeddings of a large class of open varieties from canonical forms of their real regions. This is the technical heart of the paper. The following sections will focus on deriving explicit equations for these embeddings via positive geometries.

Determining the log canonical compactification of a given open variety  $U$ explicitly can be subtle, see e.g. \cite{KeelTevelevM0n, tevelev2006hyperplanes, HKTdelPezzo,corey2021initial,corey2023x38}. Even if one starts with a simple normal crossing compactification $X$ of $U$, to obtain the log canonical model one may have to apply a sequence of blowups and blow-downs. In the cases we consider, we obtain an embedding of the log canonical compactification directly by using canonical forms of real regions. We illustrate this with an example of line arrangement complements in the projective plane. 

\begin{example}\label{eg:lines}
    Figure \ref{fig:fivelines} shows three different arrangements $\cA_1, \cA_2,$ and $\cA_3$ of five lines $L_1, \ldots, L_5$ in $\bP^2.$ In each case, the wonderful compactification of $\bP^2 \setminus \cA_i$ is (by definition) the blowup at the points where at least three lines meet. This is a simple normal crossing compactification. 
    
    In the first case, the log canonical compactification equals $\rm{Bl}_p\bP^2$.
    In the second case, the bundle $\mathcal{O}_X(K_X + D)$ is not big (the Krull dimension of the log canonical ring is two).
    In the third case, the log canonical compactification is $\bP^1 \times \bP^1$ and is obtained from the blowup $\rm{Bl}_{p,q} \bP^2$ by contracting the proper transform of the line connecting $p$ and $q$;  see \cite[Ex. 4.1]{tevelev2007tori}.  
    \begin{figure}[!h]
        \scalebox{0.8}{\tikzset{every picture/.style={line width=0.75pt}} 

\begin{tikzpicture}[x=0.75pt,y=0.75pt,yscale=-1,xscale=1]

\draw    (631,180) -- (451,180) ;
\draw    (520,60) -- (620.75,210.75) ;
\draw    (560,60) -- (483.75,211.25) ;
\draw    (460,200) -- (620,120) ;
\draw    (600,180) -- (500,180) ;
\draw [shift={(500,180)}, rotate = 180] [color={rgb, 255:red, 0; green, 0; blue, 0 }  ][fill={rgb, 255:red, 0; green, 0; blue, 0 }  ][line width=0.75]      (0, 0) circle [x radius= 3.35, y radius= 3.35]   ;
\draw [shift={(600,180)}, rotate = 180] [color={rgb, 255:red, 0; green, 0; blue, 0 }  ][fill={rgb, 255:red, 0; green, 0; blue, 0 }  ][line width=0.75]      (0, 0) circle [x radius= 3.35, y radius= 3.35]   ;
\draw    (640,200) -- (480,120) ;
\draw    (416,181) -- (236,181) ;
\draw    (305,61) -- (405.75,211.75) ;
\draw    (345,61) -- (268.75,212.25) ;
\draw    (245,201) -- (405,121) ;
\draw    (385,181) -- (285,181) ;
\draw [shift={(285,181)}, rotate = 180] [color={rgb, 255:red, 0; green, 0; blue, 0 }  ][fill={rgb, 255:red, 0; green, 0; blue, 0 }  ][line width=0.75]      (0, 0) circle [x radius= 3.35, y radius= 3.35]   ;
\draw    (240,220) -- (380,100) ;
\draw    (200,180) -- (20,180) ;
\draw    (60,60) -- (169.75,210.75) ;
\draw    (109,60) -- (32.75,211.25) ;
\draw    (9,200) -- (169,120) ;
\draw    (149,180) -- (49,180) ;
\draw [shift={(49,180)}, rotate = 180] [color={rgb, 255:red, 0; green, 0; blue, 0 }  ][fill={rgb, 255:red, 0; green, 0; blue, 0 }  ][line width=0.75]      (0, 0) circle [x radius= 3.35, y radius= 3.35]   ;
\draw    (200,200) -- (20,100) ;

\draw (521,205) node   [align=left] {\begin{minipage}[lt]{27.2pt}\setlength\topsep{0pt}
$\displaystyle p$
\end{minipage}};
\draw (590,205) node   [align=left] {\begin{minipage}[lt]{13.6pt}\setlength\topsep{0pt}
$\displaystyle q$
\end{minipage}};
\draw (306,206) node   [align=left] {\begin{minipage}[lt]{27.2pt}\setlength\topsep{0pt}
$\displaystyle p$
\end{minipage}};
\draw (70,205) node   [align=left] {\begin{minipage}[lt]{27.2pt}\setlength\topsep{0pt}
$\displaystyle p$
\end{minipage}};
\draw (546,159) node [anchor=north west][inner sep=0.75pt]   [align=left] {$\displaystyle 1$};
\draw (540,115) node [anchor=north west][inner sep=0.75pt]   [align=left] {$\displaystyle 4$};
\draw (517,147) node [anchor=north west][inner sep=0.75pt]   [align=left] {$\displaystyle 2$};
\draw (567,145) node [anchor=north west][inner sep=0.75pt]   [align=left] {$\displaystyle 3$};

\end{tikzpicture}}
        \caption{Three arrangements $\cA_1, \cA_2, \cA_3$ of five lines in $\bP^2$}\label{fig:fivelines}
    \end{figure}
    
    One can verify the claims for $\mathcal{A}_1$ and $\mathcal{A}_3$ by using canonical forms. More precisely, we will show in Theorem \ref{thm:hyperplanearr} that the log canonical embeddings coincide
with the Zariski closures of the maps given by the canonical forms of the real bounded regions of $\mathbb{P}^2\setminus\mathcal{A}_i$. For example, for the arrangement $\cA_3$, we choose the concrete parameterization given in Example \ref{eg:lines prelim}. The bounded regions are labeled one through four. The canonical forms of the triangles $1, 24, 34,$ and $1234$ yield the map 
    \begin{align*}
        \bP^2 \setminus \cA_3 & \to \bP^3, \\ 
        [x: y: z] & \mapsto \left[\frac{1}{z(x-z)(y-z)}: \frac{1}{yz(x-z)}:\frac{1}{xz(y-z)}:\frac{1}{x y z}\right].
    \end{align*}
    Canonical forms are additive along triangulations of polytopes \cite[Section 3.1]{Arkani_Hamed_2017}. Thus the canonical forms the four triangles span the same space as those of the bounded regions. 
    The image has Zariski closure $\bP^1 \times \bP^1,$ given by the vanishing of a single quadric.
\end{example}

In the remainder of this section, we explain and generalize the phenomenon illustrated in Example \ref{eg:lines}. Suppose that the pair $(X,Y)$ has genus zero. Then the canonical form map~\eqref{eq:canonical map} is surjective by~\cite[Remark~2.7]{brown-dupont}, so the canonical forms associated to a basis of $H_n(X,Y)$ span $\Omega^n_{\log}(U)$. The linear subspace of $|K_X+Y|$ of forms whose restriction to $U$ lies in $\Omega^n_{\log}(U)$ yields a rational map 
\[
\phi_{\mathrm{can}} \colon X \dashrightarrow \mathbb{P}\Omega^n_{\log}(U)^\vee.
\]
Choose a basis $\omega_1,\ldots,\omega_s$ of $\Omega^n_{\log}(U)$ consisting of canonical forms.
Restricting to $U=X\setminus Y$, where the forms are regular, this map is given concretely by
\begin{equation}
\phi_{\mathrm{can}} \colon U \longrightarrow \mathbb{P}\Omega^n_{\log}(U)^\vee,
\qquad
u \longmapsto [\omega_1(u):\cdots:\omega_s(u)].
\end{equation}

Our goal is to describe the image of $\phi_{\mathrm{can}}$. Under suitable hypotheses, this image is precisely the log canonical compactification of~$U$.

\begin{theorem} \label{thm:mainnew}
    Let $U$ be a smooth complex algebraic variety whose log canonical compactification $X_{\rm lc}$ exists. Let $X$ be a compactification of $U$ with boundary divisor $Y:=X\setminus U$.
    \begin{enumerate}
        \item Suppose that the pair $(X,Y)$ has genus zero and the log canonical bundle $K_{X_{\rm lc}} + D_{\rm lc}$ is very ample. Then the map $\phi_{\mathrm{can}}:U\to\mathbb{P}^{\mathrm{cr}(X,Y)-1}$ equals the log canonical embedding of $U$. The image of this map does not depend on the choice of compactification.
        \item Suppose further that $(X,Y)$ is a positive arrangement with $\mathrm{cr}_\mathbb{R}(X,Y)=\mathrm{cr}(X,Y)$. Then the canonical forms of the real regions realize this log canonical embedding. 
    \end{enumerate}
\end{theorem}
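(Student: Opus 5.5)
The strategy is to identify $\Omega^n_{\log}(U)$ with the degree one part $H^0(X_{\rm lc},\cO(K_{X_{\rm lc}}+D_{\rm lc}))$ of the log canonical ring $R(U)$. Once this is done, $\phi_{\mathrm{can}}$ is the map given by the complete linear system $|K_{X_{\rm lc}}+D_{\rm lc}|$ restricted to $U$. By very ampleness this is the log canonical embedding.

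\textbf{Step 1: reduce to an SNC model.} By Definition \ref{def:logforms} and \cite[Proposition 1.13]{brown-dupont}, $\Omega^n_{\log}(U)$ depends only on $U$. We may therefore compute it on a smooth SNC compactification $(Y',\Delta)$ of $U$. Using Hironaka, choose $(Y',\Delta)$ dominating both $X$ and $X_{\rm lc}$ via maps $p\colon Y'\to X$ and $q\colon Y'\to X_{\rm lc}$, as in the proof of Proposition \ref{prop:lc unique}. By definition, $\Omega^n_{\log}(U)$ is the image of $H^0(Y',\Omega^n_{Y'}(\Delta))=H^0(Y',\cO(K_{Y'}+\Delta))$ under restriction to $U$, and restriction is injective.

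\textbf{Step 2: compare with $X_{\rm lc}$.} Since $(X_{\rm lc},D_{\rm lc})$ is log canonical, equation \eqref{eq:discrepancy2} gives
\[
K_{Y'}+\Delta=q^*(K_{X_{\rm lc}}+D_{\rm lc})+G
\]
with $G$ effective and $q$-exceptional. The projection formula and $q_*\cO(G)=\cO$ then give
\[
H^0(Y',K_{Y'}+\Delta)\cong H^0(X_{\rm lc},K_{X_{\rm lc}}+D_{\rm lc}).
\]
This isomorphism is compatible with restriction to $U$, since $q$ is the identity there. Hence $\Omega^n_{\log}(U)=H^0(X_{\rm lc},K_{X_{\rm lc}}+D_{\rm lc})$ as spaces of $n$-forms on $U$.

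\textbf{Step 3: conclude part (1).} Because $(X,Y)$ has genus zero, the canonical form map \eqref{eq:canonical map} is surjective by \cite[Remark 2.7]{brown-dupont}, and $\dim\Omega^n_{\log}(U)=\mathrm{cr}(X,Y)$ \cite[Sec. 4]{brown-dupont}. A basis $\omega_1,\dots,\omega_s$ of canonical forms is therefore a basis of the complete linear system $|K_{X_{\rm lc}}+D_{\rm lc}|$. The map $u\mapsto[\omega_1(u):\cdots:\omega_s(u)]$ is thus the restriction to $U$ of the embedding $X_{\rm lc}\hookrightarrow\bP^{s-1}$ given by this very ample divisor. Its image closure is $X_{\rm lc}$. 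A change of basis only acts by $PGL_s$. Independence from the choice of compactification $X$ follows because $\Omega^n_{\log}(U)$ depends only on $U$, and $X_{\rm lc}$ is unique by Proposition \ref{prop:lc unique}.

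\textbf{Step 4: part (2).} If $\mathrm{cr}_\mathbb{R}(X,Y)=\mathrm{cr}(X,Y)$, the canonical forms of the real regions span a $\mathrm{cr}(X,Y)$-dimensional subspace. By Lemma \ref{lem:canformslogarithmic} these forms lie in $\Omega^n_{\log}(U)$; more generally they are images of cycles under $\omega$. So they span $\Omega^n_{\log}(U)$, and any maximal linearly independent subset is a basis. Part (1) then applies.

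\textbf{Main obstacle.} The delicate point is Step 2. One must check that the identification of sections holds as rational forms on $U$, not merely abstractly. The isomorphism is realized by pulling back along $q$ and restricting, which is the identity on $U$; it also needs $q_*\cO_{Y'}(G)=\cO_{X_{\rm lc}}$, which uses normality of $X_{\rm lc}$. A further subtlety is that $X_{\rm lc}$ may be singular, so $\Omega^n_{\log}$ must be computed only on the SNC model $Y'$, never directly on $X_{\rm lc}$ or $X$ (compare Example \ref{eg:lines prelim}).
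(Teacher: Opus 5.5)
Your proposal is correct and follows essentially the same route as the paper: identify $\Omega^n_{\log}(U)$ with the degree-one part $H^0(X_{\rm lc},K_{X_{\rm lc}}+D_{\rm lc})$ of the log canonical ring, use genus zero to get that canonical forms span, use very ampleness to get the embedding, and use $\mathrm{cr}_\mathbb{R}=\mathrm{cr}$ for the real regions. The differences are minor. You prove that identification explicitly, as forms on $U$, via a common SNC resolution and the discrepancy computation of Proposition~\ref{prop:lc unique}, which the paper only asserts. You also treat a non-SNC $X$ through the intrinsic nature of $\Omega^n_{\log}(U)$, whereas the paper invokes the modification-invariance $\omega^X\circ f_*=\omega^{\tilde X}$.
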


\begin{proof}
    Assume $X$ is an SNC compactification of $U$. Let $R(U)$ be the log canonical ring of $U.$ Then by \cite[Proposition 1.13]{brown-dupont} we have the isomorphism $\Omega^n_{\mathrm{log}}(U)\cong H^0(X,K_X+Y)$, so the space of logarithmic forms is isomorphic to the degree one part of $R(U),$ which is precisely $H^0(X_{\rm lc},K_{X_{\rm lc}}+D_{\rm lc}).$
    
    Since the genus $g(X,Y)$ is zero, we obtain the surjective map $\omega: H_n(X, Y) \to \Omega^n_{\mathrm{log}}(U)$ as in \eqref{eq:canonical map}, which sends each homology cycle to its canonical form. This gives a spanning set of sections of $H^0(X_{\rm lc},K_{X_{\rm lc}}+D_{\rm lc}).$ Since the bundle $K_{X_{\rm lc}}+D_{\rm lc}$ is very ample, the sections give an embedding of $X_{\rm lc}$ in projective space, which restrict to the map $\phi_{\rm{can}}$ on the open set $U.$ Since polynomial maps are closed and $U$ is dense in $X_{\rm lc},$ we have that $\overline{\phi(U)}$ is the log canonical compactification and $\phi$ is the log canonical embedding. 

    Now suppose $X$ is not an SNC compactification. By Hironaka's resolution of singularities \cite{hironaka}, there exists a modification $(\tilde{X},\tilde{Y})$ in the sense of \cite[Def. 1.2]{brown-dupont} that is an SNC compactification. The space of canonical forms is invariant under modifications \cite[Section 2.3.1]{brown-dupont}. That is, the pushforward along $f: \tilde{X} \to X$ yields an isomorphism of relative homology groups $f_*: H_n(\tilde{X}, \tilde{Y}) \to H_n(X,Y)$ such that $\omega^X \circ f_* = \omega^{\tilde{X}}.$  Thus the images of $\omega^X$ and $\omega^{\tilde{X}}$ both span the vector space $\Omega^n_{\log}(U).$ 
    
    Finally, if $\mathrm{cr}_{\mathbb{R}}(X,Y)=\mathrm{cr}(X,Y)$, then the canonical forms of the real regions of $U$ already span the space $\Omega^n_{\mathrm{log}}(U)$ and yield the desired embedding.
\end{proof}

For the examples we consider -- hyperplane arrangement complements in Section \ref{sec:hyperplanes} and nonlinear examples in Section \ref{sec:nonlinear} -- the existence of the log canonical compactifications has been proven by other sources. We thus concentrate on finding the equations of these embeddings.

The situation in which all real regions are positive geometries and $\mathrm{cr}_{\mathbb{R}}(X,Y)=\mathrm{cr}(X,Y)$ is particularly nice, since in this case the canonical forms can be written down by following a combinatorial procedure. This yields an explicit parametrization of the log canonical embedding of $U$ and allows us to find its defining equations by using implicitization techniques. This is also the case in all examples that we consider in the following sections.

\section{Equations for Compactifications of Hyperplane Arrangement Complements}\label{sec:hyperplanes}
Complements of hyperplane arrangements are one of the simplest classes of open varieties satisfying the conditions of Theorem \ref{thm:mainnew}. Their log canonical models were described by Hacking, Keel and Tevelev in \cite[Sec. 2]{tevelev2006hyperplanes}. In this section, we recall this construction, show how to obtain it via canonical forms, and present an algorithm to find its defining equations.

We start by reviewing the construction in \cite[Sec. 2]{tevelev2006hyperplanes}. Consider an essential arrangement $\mathcal{A}$ of $m$ complex hyperplanes defined over $\mathbb{R}$ inside the projective space $\bP^{n-1}$. Assume the underlying matroid of $\mathcal{A}$ is connected. We define $U := \bP^{n-1} \setminus \mathcal{A}$ to be the complement. We denote the real linear forms defining this arrangement by $F_1, \ldots, F_m.$ Then there is a map
\begin{align*}
    F: \bP^{n-1} & \to \bP^{m-1}, \\
    x& \mapsto [F_1(x): \cdots : F_m(x)],
\end{align*}
which embeds $\bP^{n-1}$ as a linear subspace of $\bP^{m-1}.$ Thus $F$ determines a point $L$ in the Grassmannian $\Gr(n,m)$ of $(n-1)$-dimensional projective subspaces of $\bP^{m-1}.$ Let $G_e$ denote the sub-Grassmannian of $\Gr(n,m)$ parameterizing linear spaces which contain the vector $e := (1,1,\ldots, 1).$ We intersect $G_e$ with the torus orbit closure of $L$ to obtain
\begin{equation}
    S := \overline{(\bC^*)^m \cdot L} \, \cap \, G_e.
\end{equation}
The variety $U$ is embedded into $S$ as an open subvariety as follows. The map $F$ sends $U$ to a closed subvariety of the torus $(\bC^*)^m.$ Then the map 
\begin{align*}
    \psi: U & \to \Gr(n,m), \\
    u & \mapsto F(u)^{-1} \cdot L
\end{align*}
gives an embedding of $U$ into $S.$ Let $B:= S \setminus \psi(U).$ The pair $(S,B)$ is called the \emph{visible contour} of $U$ and was defined by Kapranov \cite{kapranov1993chow}. Then \cite[Theorem 2.2]{tevelev2006hyperplanes} proves that $(S,B)$ is the log canonical compactification of $U$, and moreover that $K_S +  B$ is very ample. Our first result is that this compactification is determined by quadrics scheme-theoretically. 

\begin{proposition}
    Let $\mathcal{A}$ be an essential hyperplane arrangement with connected underlying matroid. The Hacking--Keel--Tevelev log canonical compactification $S$ of $\mathbb{P}^{n-1}\setminus \mathcal{A}$ is defined by quadrics as a scheme.
\end{proposition}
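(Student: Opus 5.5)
Recall that $S=\overline{(\bC^*)^m\cdot L}\cap G_e$, and that by \cite[Theorem 2.2]{tevelev2006hyperplanes} $S$ sits inside $G_e$, which is itself a Grassmannian: linear spaces containing $e$ correspond to $(n-1)$-dimensional subspaces of $\bC^m/\langle e\rangle$, so $G_e\cong \Gr(n-1,m-1)$. The plan is to show that, scheme-theoretically, $S$ is cut out inside the Plücker embedding of $\Gr(n,m)$ by three kinds of quadrics. First, the Plücker relations. Second, the linear equations defining $G_e$. Third, quadratic binomial equations defining the torus orbit closure $\overline{(\bC^*)^m\cdot L}$. The linear equations defining $G_e$ say that $e\wedge p=0$ in $\bigwedge^{n+1}\bC^m$; each of these is a linear form in the Plücker coordinates.

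The key step is the orbit closure. It is a (possibly non-normal) projective toric variety. Its coordinates are the Plücker coordinates $p_I$ for $I$ a basis of the matroid $M$ of $\mathcal{A}$, and it is the closure of the torus orbit of the point $(p_I(L))_I$. This is the toric variety of the matroid base polytope, intersected with the Grassmannian. I would use White's theorem / the results on matroid basis polytopes to argue as follows.

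Set-theoretically, and in fact scheme-theoretically, $\overline{(\bC^*)^m\cdot L}$ is cut out inside the linear span $\{p_I=0 : I\notin M\}$ by two sets of equations: the Plücker quadrics, and quadratic binomials of the form $p_Ip_J\,c_{I'}c_{J'}-p_{I'}p_{J'}\,c_Ic_J$ for symmetric basis exchanges, where $c$ denotes the Plücker vector of $L$. Using the symmetric exchange property, these binomials generate the toric ideal of the base polytope; this is White's conjecture. White's conjecture is known only up to saturation in general, and in the quadratic form only for special classes. Since we only need a scheme-theoretic statement, I would instead argue locally on affine charts $p_B\neq 0$. On such a chart the Grassmannian is an affine space of matrices $[\mathrm{Id}\mid A]$, and the orbit closure is the closure of a torus orbit of the matrix $A_L$. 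There I would show that the ideal is generated, after localization, by the $2\times2$ binomials $a_{ij}a_{kl}c_{il}c_{kj}-a_{il}a_{kj}c_{ij}c_{kl}$ together with vanishings $a_{ij}=0$, which are linear in Plücker coordinates on the chart. These local generators homogenize to quadrics.

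Finally, intersecting with the linear space defining $G_e$ adds only linear equations, so $S$ is the scheme-theoretic intersection of quadrics and linear forms. Since linear forms times the linear coordinates give quadrics, and a projective scheme cut out by linear and quadratic forms is also cut out by quadrics, this finishes the argument. One point still needs checking: that the intersection is scheme-theoretically reduced, i.e.\ that it equals $S$ with its reduced structure. For this I would use the HKT result that $S$ is the log canonical model of the reduced pair $(S,B)$. I expect the main obstacle to be exactly this scheme-theoretic (saturation-free) description of the torus orbit closure by quadrics. If the chart argument fails in non-normal cases, the fallback is to invoke results on toric ideals of matroid base polytopes being quadratically generated up to saturation (Lasoń–Michałek), which is precisely the scheme-theoretic statement needed.
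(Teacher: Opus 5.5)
Your fallback is the right proof, and it is exactly the paper's. By \cite[Lemma 2.1]{tevelev2006hyperplanes}, $S$ is the scheme-theoretic intersection of $G_e$ with the toric variety $T=\overline{(\bC^*)^m\cdot L}$ of the matroid. $G_e$ is cut out by the Pl\"ucker quadrics together with the linear forms $e\wedge p$. By Laso\'n--Micha\l{}ek \cite[Theorem 3]{michalek2014white}, the symmetric-exchange quadrics cut out $T$ up to saturation, and ``up to saturation'' already is the scheme-theoretic statement, so the detour is unnecessary. The same lemma of Hacking--Keel--Tevelev is also the reducedness input you are missing. Your justification does not supply it: knowing that the reduced variety $S$ carries a log canonical pair $(S,B)$ does not rule out nilpotents or embedded components on the scheme $T\cap G_e$. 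You need the equality $S=T\cap G_e$ as schemes.

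The chart argument you lead with should be dropped, because its central claim is false, even set-theoretically. In the chart $p_B\neq 0$ with coordinates $a_{ij}$ of $[\mathrm{Id}\mid A]$, the orbit closure is the closure of $\{(t_i^{-1}t_j c_{ij})\}$, where $c_{ij}=a_{ij}(L)$. This is the affine toric variety of the bipartite graph $G_B$ having an edge $(i,j)$ whenever $B-i+j$ is a basis. Its ideal needs a binomial for every chordless even cycle of $G_B$, not only for the $4$-cycles.

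For a counterexample, take $n=3$, $m=6$ and the forms $x_1,\,x_2,\,x_3,\,\alpha_1x_1+\beta_3x_3,\,\beta_1x_1+\alpha_2x_2,\,\beta_2x_2+\alpha_3x_3$ with generic nonzero coefficients. This realizes the rank-$3$ whirl, which is essential with connected matroid. At $B=\{1,2,3\}$ the graph $G_B$ is a hexagon. Modulo $a_{16}=a_{24}=a_{35}=0$ every one of your $2\times 2$ binomials vanishes. Your equations therefore define a $6$-dimensional linear space, while the orbit closure is $5$-dimensional. The missing equation is the cubic $\beta_1\beta_2\beta_3\,a_{14}a_{25}a_{36}-\alpha_1\alpha_2\alpha_3\,a_{15}a_{26}a_{34}$.

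This cubic is a Pl\"ucker quadric in disguise. Up to sign, it is the dehomogenization at $p_{123}=1$, modulo those vanishings, of the exchange quadric $\alpha_1\alpha_2\alpha_3\,p_{123}p_{456}-\det(A_L)\,p_{234}p_{156}$. This works because $p_{456}=\det A$ and $p_{156}=a_{25}a_{36}-a_{26}a_{35}$ have degrees $3$ and $2$ in the chart coordinates. So degree in chart coordinates is the wrong measure. Non-normality is not the obstruction either: matroid base polytopes are normal (White), so $T$ is projectively normal.
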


\begin{proof}
    By \cite[Lemma 2.1]{tevelev2006hyperplanes}, $S$ is equal to the scheme-theoretic intersection of $G_e$ and the toric variety $T$ associated with the matroid of $\mathcal{A}$. The Grassmannian $G_e$ is cut out by quadrics, and by \cite[Theorem 3]{michalek2014white} the same is true for $T$ up to saturation, which yields the claim. 
\end{proof}

 By composing the map $\psi$ with the Pl\"ucker embedding of $G_e,$ we obtain a map $\phi$ from $U$ to $\bP^{\binom{m-1}{n-1}-1}.$ This is the log canonical embedding of $U$ \cite[Theorem 2.2]{tevelev2006hyperplanes}.
 
We now explain how to view the embedding $\phi$ in coordinates.
We choose coordinates $(z_0:\cdots:z_{n-1})$ on $\bP^{n-1}$ so that $F_m = z_{n-1}$. This allows us to use local coordinates $x_i := z_i/z_{n-1}, \ i=0, \, \ldots, \, n-2$ on an affine chart in $\bP^{n-1}$. We then define $\omega_i := d\log (F_i / F_m)$ for $i=1,\ldots,m-1$.  This can be written uniquely as 
\[\omega_i=\sum_{j=0}^{n-2} a_{i,j}(x_0, \, \ldots, \, x_{n-2})dx_j,\]
where $a_{i,j}$ are rational functions.
Then in the basis $\{dx_i\}_{i=0}^{n-2}$, the map 
\[\bC^{m-1} \otimes \mathcal{O}_U \to \Omega_U\]
is given concretely by the $(n-1) \times (m-1)$ matrix 
$$M(x_0,\ldots,x_{n-2}) = \begin{bmatrix}
    a_{1,0} & \ldots & a_{m-1,0} \\
    \vdots & \ddots& \vdots\\
    a_{1,n-2} &\ldots & a_{m-1,n-2}
\end{bmatrix}.$$
Composing the map $\mathbb{P}^{n-1}\setminus \mathcal{A} \to \mathrm{Gr}(n-1,m-1)$ given by $M$ with the Pl\"ucker embedding gives an embedding into $\bP^{\binom{m-1}{n-1}-1}$. 

\begin{remark}
    The ambient space of the embedding described above is $\mathbb{P}^{\binom{m-1}{n-1}-1}$. The number of coordinates is the number of bounded regions in a simple arrangement of $m$ hyperplanes in $\mathbb{P}^{n-1}$ with a generic hyperplane at infinity \cite{zaslavsky}. When the arrangement is not simple, the number of bounded regions drops, and we will see in Theorem \ref{thm:hyperplanearr} that the embedding lives in a proper projective subspace of $\mathbb{P}^{\binom{m-1}{n-1}-1}$. 
\end{remark}

Hyperplane arrangement complements satisfy the conditions of Theorem \ref{thm:mainnew}. Moreover, in this special case to obtain the log canonical embedding, it suffices to only consider canonical forms of regions that are bounded in some generic affine chart. Here ``generic'' means that the chart contains all of the intersections of the hyperplanes of the arrangement. 

\begin{theorem}\label{thm:hyperplanearr}
    Let $\mathcal{A}$ be an essential arrangement of $m$ hyperplanes in $\pp^{n-1}$ defined over $\mathbb{R}$ with complement $U := \pp^{n-1} \setminus \mathcal{A}.$ Suppose that the matroid of $\cA$ is connected. Pick a generic affine chart of $\mathbb{P}^{n-1}$ and let $\omega_1, \, \ldots, \, \omega_r \in H^0(\pp^{n-1}, \mathcal{O}_{\mathbb{P}^{n-1}}(K_{\pp^{n-1}} + m H))$ be the canonical forms of the real regions of $U$ that are bounded in this chart. Then the map
    \begin{align*}
        U & \to \pp \Omega^{n-1}_{\log}(U)^\vee, \\
        u & \mapsto [\omega_1(u)  : \cdots :\omega_r(u)]
    \end{align*}
    equals the log canonical embedding of $U$.
\end{theorem}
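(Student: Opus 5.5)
The plan is to reduce to Theorem \ref{thm:mainnew}: it is enough to show that the canonical forms of the bounded regions span $\Omega^{n-1}_{\log}(U)$. All the inputs for Theorem \ref{thm:mainnew} are available.
\begin{itemize}
\item By \cite[Theorem 2.2]{tevelev2006hyperplanes} the log canonical compactification $(S,B)$ exists and $K_S+B$ is very ample.
\item The pair $(\mathbb{P}^{n-1},\mathcal{A})$ is a positive arrangement of genus zero by \cite[Remark 2.8]{brown-dupont}.
\item Each real region is a projective polytope, hence a positive geometry whose canonical form is logarithmic by Lemma \ref{lem:canformslogarithmic}.
\end{itemize}
Part (1) of Theorem \ref{thm:mainnew} then identifies $\phi_{\mathrm{can}}$, built from any spanning set of $\Omega^{n-1}_{\log}(U)$, with the log canonical embedding. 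So the statement reduces to a linear-algebra claim: the forms $\omega_1,\ldots,\omega_r$ span $\Omega^{n-1}_{\log}(U)$. This is stronger than $\mathrm{cr}_\mathbb{R}=\mathrm{cr}$, since only the bounded regions are used.

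The first step is to compute $\dim \Omega^{n-1}_{\log}(U)$. For a hyperplane arrangement complement, $\Omega^{n-1}_{\log}(U)$ is the top graded piece of the Orlik--Solomon algebra, i.e.\ $H^{n-1}(U)$ by Brieskorn's theorem. Its dimension is $|\mu|$ of the lattice of flats of the projectivized (affine cone) arrangement, equivalently the beta invariant type count $|\chi'|$. Zaslavsky's theorem \cite{zaslavsky} says this number equals the number of regions that are bounded in an affine chart whose hyperplane at infinity is generic. Note that a generic hyperplane at infinity is added to the arrangement and is distinct from the $m$ given ones. So $r=\dim\Omega^{n-1}_{\log}(U)$, and it remains to prove that the $r$ bounded-region forms are linearly independent.

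For independence I would use the pairing between bounded regions and the cohomology, i.e.\ the fact that bounded chambers form a basis of the locally finite homology $H^{\mathrm{lf}}_{n-1}(U_\mathbb{R}\text{-chart})$, or equivalently a basis of the relative homology $H_{n-1}(\mathbb{P}^{n-1},\mathcal{A})$ modulo the kernel of $\omega$. A concrete route goes by induction on $m$ via deletion--restriction. Pick a hyperplane $H=F_i$. The residue along $H$ sends canonical forms of bounded regions adjacent to $H$ to canonical forms of bounded regions of the restricted arrangement $\mathcal{A}^H$. Forms of regions not touching $H$ become, after merging across $H$, canonical forms of bounded regions of $\mathcal{A}\setminus H$, using additivity of canonical forms under subdivision. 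Independence then follows from the exact sequence $0\to \Omega_{\log}(\mathcal{A}\setminus H)\to\Omega_{\log}(\mathcal{A})\xrightarrow{\mathrm{Res}_H}\Omega_{\log}(\mathcal{A}^H)\to 0$ together with the matching bounded-region count $b(\mathcal{A})=b(\mathcal{A}\setminus H)+b(\mathcal{A}^H)$. The alternative is to cite the known result that canonical forms of bounded chambers give a basis of the top Aomoto/Orlik--Solomon space, dual to the twisted cycles of bounded chambers.

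The main obstacle is the induction step. Canonical forms of bounded regions of $\mathcal{A}$ split by $H$ do not map cleanly: the two pieces sum to a region of $\mathcal{A}\setminus H$, which may be unbounded if the merged region crosses infinity. Handling this requires choosing $H$ carefully, or working with a genericity-preserving chart. Connectedness of the matroid is needed so that the log canonical bundle is big and very ample by \cite{tevelev2006hyperplanes}, and to avoid degenerate restrictions. Finally, the embedding lands in the subspace spanned by these $r$ forms, consistent with the remark about proper subspaces of $\mathbb{P}^{\binom{m-1}{n-1}-1}$.
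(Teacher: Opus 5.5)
\textbf{There is a genuine gap: the spanning/independence step, which carries all the content, is not proved.}

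Your reduction to Theorem \ref{thm:mainnew} is correct. So is the count $r=\dim\Omega^{n-1}_{\log}(U)=|\mu|$, via Zaslavsky with a \emph{new} generic hyperplane at infinity. The aside ``equivalently $|\chi'|$'' is wrong, though: the beta invariant counts bounded regions when one of the $m$ given hyperplanes is sent to infinity, and for $m$ generic hyperplanes it is $\binom{m-2}{n-1}$, not $\binom{m-1}{n-1}$.

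None of your three routes to independence is completed:
\begin{itemize}
\item The pairing route is not made precise. Read for the real complement in the chart, it is false: with untwisted coefficients \emph{every} chamber, bounded or not, gives a basis element of locally finite homology. The bounded-chamber basis is a twisted-homology statement.
\item ``A basis of $H_{n-1}(\mathbb{P}^{n-1},\mathcal{A})$ modulo the kernel of $\omega$'' just restates the claim.
\item The deletion--restriction induction stops at exactly the step you flag, and the last option cites an unnamed result.
\end{itemize}

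The missing idea is to prove spanning through homology, as the paper does. Since the genus is zero, $\omega\colon H_{n-1}(\mathbb{P}^{n-1},\mathcal{A})\to\Omega^{n-1}_{\log}(U)$ is surjective; here it is even an isomorphism, by \cite{brieskorn} and \cite[Remark 2.8]{brown-dupont}. By Lemma \ref{lem:canformslogarithmic}, $\omega$ sends the class of each region to its canonical form. The classes of the bounded regions span $H_{n-1}(\mathbb{P}^{n-1},\mathcal{A})$ by \cite[Proposition 6.2]{brown-dupont}. Hence their forms span $\Omega^{n-1}_{\log}(U)$, which is all Theorem \ref{thm:mainnew} needs, and your count then gives independence for free.

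Alternatively, your own induction works if you prove spanning rather than independence. State it for all arrangements, not only connected essential ones, with a generic chart.
\begin{itemize}
\item A bounded chamber of $\mathcal{A}\setminus H$ is either a bounded chamber of $\mathcal{A}$ or is cut by $H$ into two of them. By additivity its form lies in the span $V$ of the bounded-chamber forms of $\mathcal{A}$.
\item Every bounded chamber of $\mathcal{A}^H$ is a facet of at least one bounded chamber of $\mathcal{A}$. This follows from a short recession-cone argument, using that the chart is generic and $\mathcal{A}$ essential. So $\mathrm{Res}_H(V)$ contains all bounded-chamber forms of $\mathcal{A}^H$.
\item By the induction hypothesis and exactness of your deletion--restriction sequence, $V=\Omega^{n-1}_{\log}(U)$.
\end{itemize}
In this direction the merged-region problem never arises.
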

\begin{proof}
    If the hyperplane arrangement is essential and its underlying matroid is connected, then the log canonical compactification of $U$ is given by Kapranov's visible contours and the log canonical divisor $K_S + B$ is very ample \cite[Theorem 1.5]{tevelev2007tori}. 
    Furthermore, the bounded real regions of $U$ are all polytopes, and hence positive geometries. By Theorem \ref{thm:mainnew}, it suffices to show that the span of the canonical forms of these regions equals $\Omega^{n-1}_{\log}(U)$ rather than some proper subspace. But by \cite{brieskorn} and \cite[Remark 2.8]{brown-dupont}, the space $\Omega_{\log}^{n-1}(U)$ is isomorphic to $H_{n-1}(\bP^{n-1} , \cA)$. The latter space is spanned by the classes of the bounded real regions of $\bP^{n-1}_{\bR} \setminus \cA_{\bR}$ \cite[Proposition 6.2]{brown-dupont}. 
\end{proof}

In many cases, the log canonical compactification of $\pp^{n-1} \setminus \mathcal{A}$ equals the wonderful compactification. Combinatorial conditions for these two to be equal were found in \cite{Feichtner2005}. An example where the two compactifications differ is given by Example \ref{eg:lines}, or more generally  by a line in the plane and two points on it, such that every other line meets one of these two points \cite[Example 4.1]{tevelev2007tori}. In that case, the wonderful compactification is the blowup of $\bP^2$ at two points, and the log canonical compactification $\bP^1 \times \bP^1$ is obtained by contracting a curve. 

\begin{example}[Generic hyperplanes]
Let $H_1, \ldots, H_m$ be hyperplanes in $\pp^{n-1}$ in general position. Then $\pp^{n-1}$ itself is a simple normal crossing compactification. The divisor $D$ given by the union of these hyperplanes is rationally equivalent to $mH,$ where $H$ is the divisor of a hyperplane. The canonical bundle of $\mathbb{P}^{n-1}$ is $\co_{\pp^{n-1}}(-n).$ Thus $\Omega_X^{n-1}(D) = \co_{\pp^{n-1}}(m-n),$ which is ample whenever $m$ is greater than $n.$ In this case, the embedding given by the canonical forms is linearly isomorphic to the Veronese embedding $\nu_{m-n}(\mathbb{P}^{n-1})$. 
\end{example}

\begin{example}
    Let us consider the hyperplane arrangement $\cA_3$ from Example \ref{eg:lines}. We set $F_5 = z$ and use the local coordinate system $u: = x/z$ and $v:= y/z.$ We write
\[\omega = \begin{bmatrix}
u & u-1 & 0 & 0 \\
0 & 0 & v & v-1
\end{bmatrix}^T \begin{bmatrix}
    du \\ dv
\end{bmatrix}.\] 
The four columns represent the forms $\omega_i.$ The image under the Pl\"ucker embedding lies in $H^0(\bP^2, \Omega^2_{\bP^2}(2H))$ which is six-dimensional. It lies in the subspace spanned by the four forms $\omega_1 \wedge \omega_3, \omega_2 \wedge \omega_3, \omega_1 \wedge \omega_4,$ and $\omega_2 \wedge \omega_4,$ which satisfy the single quadratic relation $(\omega_1 \wedge \omega_3) \cdot (\omega_2 \wedge \omega_4) = (\omega_1 \wedge \omega_4) \cdot (\omega_2 \wedge \omega_3)$ coming from the Pl\"ucker relation of $\Gr(2,4).$ Thus we also recover the log canonical compactification $\bP^1 \times \bP^1$ using the approach of \cite{tevelev2006hyperplanes}.
\end{example}

Canonical forms of polytopes have a clear combinatorial structure. They have the form 
$$\dfrac{\alpha_P(x)}{f_1(x)\ldots f_k(x)}\Omega_{\mathbb{P}^{n-1}}(x),$$
where $\Omega_{\mathbb{P}^{n-1}}(x) = \sum\limits_{i=0}^{n-1} (-1)^{i} x_i dx_0\wedge \ldots \wedge \widehat{dx_i}\wedge \ldots \wedge dx_{n-1}$, the $f_i(x)$ are the linear forms defining the facets of the polytope, and $\alpha_P(x)$ is the \emph{adjoint polynomial} of $P$, see \cite[Sec. 1]{kohnranestad}. A concrete recipe for computing the adjoint polynomial from the defining equations of the facets for arbitrary polytopes was given in \cite{clemensjulian}. Based on this, we present Algorithm \ref{alg:hyp} for computing the ideal of the log canonical embedding of $\mathbb{P}^{n-1}\setminus \mathcal{A}$ for an essential hyperplane arrangement $\mathcal{A}$ whose underlying matroid is connected. 
We implemented Algorithm \ref{alg:hyp} in \texttt{Julia}. The implementation along with additional comments is available at \cite{zenodo}. 

We conclude this section with a series of examples illustrating our approach in action. We consider a generic hyperplane arrangement and a non-generic one, and study how the log canonical compactification degenerates from one to the other.

\begin{algorithm}[h!]
\caption{Computing the log canonical model of a hyperplane arrangement complement}\label{alg:hyp}
\textbf{Input:}  An $m\times n$ matrix $A$ defining a hyperplane arrangement $\mathcal
A$ in $\mathbb{P}^{n-1}$\\
\textbf{Output:} The ideal of the log canonical embedding of $\mathbb{P}^{n-1}\setminus \mathcal{A}$ in $\mathbb{P}^{\mathrm{cr}(\mathbb{P}^{n-1},\mathcal{A})-1}$ 

\begin{algorithmic}[1]
\State\label{step:1}Identify the regions of $\mathcal{A}$ and label them with sign vectors. 

\State\label{step:2}For each region $P$, identify its supporting hyperplanes using the sign vectors and build a matrix $A_P$ whose rows define these hyperplanes. 

\State\label{step:3}From $A_P$, compute the \emph{point residual arrangement} $\mathcal{R}_0(P)$ of each region by using \cite[Section 6]{clemensjulian} along with the multiplicity of each point.

\State\label{step:4}Compute the unique polynomial $\alpha_P$ of minimal degree interpolating $\mathcal{R}_0(P)$. This is the adjoint of $P$.

\State\label{step:5}For each region, construct the \emph{canonical function} $\alpha_P/(f_{1_P}\cdots f_{k_P})$, where $f_{i_P}$ are the linear forms defining the facets of $P$. 

\State\label{step:6}Compute a basis of the linear space spanned by the canonical functions.

\State\label{step:7}Implicitize the variety parametrized by this basis to obtain its defining ideal $J$.

\Return \label{step:return} $\text{A set of generators of } J$, the ideal of the log canonical embedding
\end{algorithmic}
\end{algorithm}

\newpage

\begin{example}[A generic case]
    We work in the affine chart of $\bP^2$ parameterized by $[x:y:1].$ Consider the hyperplane arrangement defined by 
\[
f_1 = x, \quad
f_2 = y, \quad
f_3 = 1, \quad
f_4 = x+y+1, \quad
f_5 = x+2y+3.
\]

We first describe the log canonical compactification of its complement following \cite{tevelev2006hyperplanes}. Define logarithmic $1$-forms
\[
\omega_i = d\log\!\left(\frac{f_i}{f_5}\right), \quad i=1,\dots,4.
\]
Writing $\omega_i = a_{i,0}\,dx + a_{i, 1}\,dy$, we obtain
\[
M(x,y)=
\begin{pmatrix}
\dfrac{2y+3}{x(x+2y+3)} &
-\dfrac{1}{x+2y+3} &
-\dfrac{1}{x+2y+3} &
\dfrac{y+2}{(x+y+1)(x+2y+3)}
\\[1em]
-\dfrac{2}{x+2y+3} &
\dfrac{x+3}{y(x+2y+3)} &
-\dfrac{2}{x+2y+3} &
\dfrac{1-x}{(x+y+1)(x+2y+3)}
\end{pmatrix}.
\]

The induced map $U \to \Gr(2,4) \subset \mathbb{P}^5$ is given by the $2\times2$ minors of $M(x,y)$.
After clearing denominators, we obtain the embedding:
\begin{equation}\label{eq:hkt generic}
(x,y) \longmapsto
\bigl[
3(x+y+1):
-2y(x+y+1):
y:
x(x+y+1):
-2x:
xy
\bigr].
\end{equation}
Up to a linear change of coordinates, the image closure is the Veronese embedding of $\bP^2.$

We now show that this is linearly isomorphic to the variety parametrized by the canonical forms. Let us work in the affine chart with coordinates $(u,v)$ parameterized by $[u:v:1-2u-3v]$. The defining functions in this chart become
\[
\ell_1=u,\qquad
\ell_2=v,\qquad
\ell_3=1-2u-3v,\qquad
\ell_4=1-u-2v,\qquad
\ell_5=3-5u-7v.
\]
There are six bounded regions $P_i$, with canonical functions given by:
{\small\[
\begin{aligned}
F_0(u,v) &= \frac{1}{u\,v\,(1-2u-3v)},
&
F_1(u,v) &= \frac{u+2v}
{u\,v\,(2u+3v-1)\,(3-5u-7v)},\\[1ex]
F_2(u,v) &= \frac{1}
{(-u)\,(1-u-2v)\,(5u+7v-3)},
&
F_3(u,v) &= \frac{2u+v}
{u\,v\,(1-u-2v)\,(5u+7v-3)},\\[1ex]
F_4(u,v) &= \frac{2-3u-4v}
{(-u)\,(2u+3v-1)\,(1-u-2v)\,(3-5u-7v)},
&
F_5(u,v) &= \frac{1}
{(-v)\,(2u+3v-1)\,(3-5u-7v)}.
\end{aligned}
\]}
Now, we change to the coordinates $x$ and $y.$ Let
\[
D:=x\,y\,(x+y+1)\,(x+2y+3).
\]
Then the canonical functions of $P_i$ may be written in the form $N_i(x,y)/D.$
The functions $N_i$ then give the log canonical embedding
\[
\bigl[
x^2+3xy+4x+2y^2+5y+3:
-(x^2+3xy+x+2y^2+2y):
y:
-(2x+y):
xy+2y^2+2y:
x^2+xy+x
\bigr].
\]
The two parametrizations are indeed related by a linear change of coordinates:
\[
\begin{pmatrix}
N_0\\ N_1\\ N_2\\ N_3\\ N_4\\ N_5
\end{pmatrix}
=
\begin{pmatrix}
1&-1&0&1&0&0\\
0&1&0&-1&0&0\\
0&0&1&0&0&0\\
0&0&-1&0&1&0\\
0&-1&0&0&0&-1\\
0&0&0&1&0&0
\end{pmatrix}
\begin{pmatrix}
G_0\\ G_1\\ G_2\\ G_3\\ G_4\\ G_5
\end{pmatrix},
\]
where $G_i$ are the coordinate functions given by the embedding in \eqref{eq:hkt generic}.
\end{example}

\begin{example}[A non-generic case]
As before, we work in the affine chart of $\bP^2$ parameterized by $[x:y:1].$ Consider the hyperplane arrangement defined by the affine-linear functions
\[
f_1 = x, \quad
f_2 = y, \quad
f_3 = -2x-3y, \quad
f_4 = x+y+1, \quad
f_5 = x+2y+3. \quad
\]
This arrangement is not simple: the hyperplanes defined by $f_1,f_2,f_3$ intersect at the origin. Note that this arrangement is a degeneration of the arrangement in the previous example where the hyperplane given by $f_3$ has been shifted to intersect the origin in the affine chart with coordinates $(u, v)$ as described above. 

As before, we define logarithmic $1$-forms
\[
\omega_i = d\log\!\left(\frac{f_i}{f_5}\right), \quad i=1,\dots,4.
\]
The construction of \cite{tevelev2006hyperplanes} with these forms produces the following map $\mathbb{P}^2\dashrightarrow \mathbb{P}^4$
\begin{equation*}\label{eq:hkt nongeneric}
\bigl[
-3xy-6y^2-9y: 
2x^2+4xy+6x:
-9xy-9y^2-9y:
6x^2+6xy+6x:
-4xy
\bigr].
\end{equation*}
In this non-generic arrangement, there are only five bounded regions, as opposed to six in the generic case. After clearing of denominators, the canonical forms give the map
\begin{equation*}
\bigl[
7xy+6y^2+9y:
2xy+3y^2:
x^2+2xy+3x:
2x^2+7xy+6y^2+6x+9y:
4x^2+8xy+3y^2
\bigr].
\end{equation*}
Just as in the generic case, there is a linear change of coordinates between the coordinate functions $G_i$ of Hacking--Keel--Tevelev, and the canonical forms $N_i$ (after putting everything over a common denominator). In this case, the coordinate change is given by 
\[
\begin{pmatrix}
N_0\\ N_1\\ N_2\\ N_3\\ N_4
\end{pmatrix}
=
\begin{pmatrix}
3 & 0 & 0 & 0 & 4 \\
-3 & 0 & 9 & 0 & -4 \\
0 & 1 & 0 & 0 & 0 \\
3 & 2 & 0 & 0 & 0 \\
-3 & -2 & 9 & 6 & 0
\end{pmatrix}
\begin{pmatrix}
G_0\\ G_1\\ G_2\\ G_3\\ G_4
\end{pmatrix}. 
\]

\end{example}

\begin{example}[Degenerations of a hyperplane arrangement]
    In this example we examine what happens to the log canonical compactification when we degenerate from five hyperplanes in general position to a situation in which three of the hyperplanes meet at a point. For example, consider the family of hyperplanes defined by the functions
    \[\mathcal{A}_t = \{x,y,z,x+y+z,x + 2y + tz\},\]
    where $t\in \mathbb{C}$ is a parameter.
    This yields a family $U_t$ of points in $\Gr(3,5).$ We define the ideal \[I_t := I(\overline{(\bC^*)^5 \cdot U_t} \cap G_e).\]
    For a generic parameter $t$, the ideal $I_t$ defines the log canonical compactification of the hyperplane arrangement complement. The fiber above $t=0$ splits into two components, which are isomorphic to $Bl_p \bP^2$ and a copy $P$ of the projective plane. They intersect at the exceptional divisor in $Bl_p \bP^2,$ which is a line in the plane $P.$ The degrees of these components are $3$ and $1,$ respectively. The first component is the log canonical compactification of the hyperplane arrangement with $t=0$.
\end{example}

\section{Nonlinear Examples: Cubic Surfaces and Configuration Spaces}\label{sec:nonlinear}

\subsection{Positive geometry of the 27 lines on the cubic surface}
Smooth cubic surfaces in $\mathbb{P}^3$ are among the most classical and well-studied objects in algebraic geometry. Each such surface contains 27 lines. When all the lines are real, the real regions of the complement of these lines are positive geometries \cite{cubicsurfaces}. Here we explicitly compute their canonical forms and the corresponding log canonical compactification. We first comment on why these canonical forms produce the log canonical compactification.

Let $X\subset \mathbb{P}^3$ be a smooth cubic surface all of whose lines are real and $Y$ be the divisor of $27$ lines on it.
The pair $(X,Y)$ has genus zero: $X$ is smooth and rational, and thus has genus zero itself, and the intersection strata of $Y$ are either lines or points, all of which have genus zero. 
The claim now follows from \cite[Corollary 3.13]{brown-dupont}. 
The divisor $Y=9(-K_X)$ is a positive multiple of the anticanonical divisor $-K_X$ and is thus very ample, since $X$ is a del Pezzo surface. 
The corresponding log canonical ring is $R=\bigoplus\limits_{m\geq 0} H^0(X, \mathcal{O}_X(8m))$ and is a Veronese subring of the coordinate ring of $X$. 
We have $\mathrm{cr}(X,Y)= \binom{8+3}{3} - \binom{8}{3}= 109 $ and a direct computation shows that the canonical forms of the real regions span a $109$-dimensional space and parametrize the eighth Veronese embedding of~$X$, the log canonical embedding of $X\setminus Y$. 

We now review the construction of canonical forms of the real regions of $X\setminus Y$ presented in \cite{cubicsurfaces}. 
Any smooth cubic surface is the blowup of $\mathbb{P}^2$ at six points in general position (no three on a line, all six not coconic). 
The 27 lines come in three flavors: $6$ lines $E_i$ which are the components of the exceptional divisor of this blowup, $15$ lines $F_{ij}$ which are the images of the lines through points $i$ and $j$ under this blowup, and $6$ lines $G_i$ which are the images of the conics through all points except $i$. Each line on the cubic surface is incident to ten lines, and the incidence relations are recorded by the graph $\mathcal{S}_{27}^{10}$, the complement of the Schl\"afli graph. 

Endow $\mathbb{P}^3$ with homogeneous coordinates $[y_0:y_1:y_2:y_3]$. Define
$$\Omega_{\mathbb{P}^3} := y_0dy_1\wedge dy_2 \wedge dy_3-y_1dy_0\wedge dy_2 \wedge dy_3 + y_2dy_0\wedge dy_1 \wedge dy_3 - y_3dy_0\wedge dy_1\wedge dy_2.$$
Now, a differential $3$-form on $\mathbb{P}^3$ has the form 
$$F(y_0,y_1,y_2,y_3)\Omega_{\mathbb{P}^3},$$
where $F$ is a rational function of degree $-4$. Suppose $X$ is defined by the equation $f(y_0,y_1,y_2,y_3)=0$ and we have the differential form 
\begin{equation} \label{eq:cf}
   \omega:=\dfrac{h(y)}{f(y)g(y)}\Omega_{\mathbb{P}^3}. 
\end{equation}
Then in the affine chart $y_0=1$ the residue of $\omega$ along $X$ is 
$$
\mathrm{Res}_X \omega=\dfrac{h(1,y_1,y_2,y_3)}{\dfrac{\partial f}{\partial y_3}(1,y_1,y_2,y_3)g(1,y_1,y_2,y_3)}dy_1\wedge dy_2.
$$
We will obtain the canonical forms of the regions of $X\setminus Y$ by taking residues of forms as in \eqref{eq:cf}. The corresponding canonical functions (i.e. the coefficients of $dy_1 \wedge dy_2$ in the canonical form) defining the log canonical embedding are the rational function coefficients of these residues homogenized by $y_i\mapsto y_i/y_0$ for $i=1,2,3$. 

The regions of $X\setminus Y$ are ``puffed'' triangles, quadrilaterals and pentagons, i.e. their boundaries are unions of at most five lines. These canonical forms were derived in \cite{cubicsurfaces}, and we provide more details on their construction. We then use them to compute the log canonical embedding explicitly.

For a triangle $L$ the canonical form is 
$\mathrm{Res}_{X} \dfrac{1}{f(y)h(y)}\Omega_{\mathbb{P}^3}$, where $h$ is the linear form defining the plane in $\pp^3$ containing the triangle.  For quadrilaterals, the numerator of the canonical function is a linear form. To define it, we identify the ``adjoint vertex'' of the quadrilateral in the graph $\mathcal{S}_{27}^{10}$. This is the unique line on $X$ not intersecting any of the four facet lines. Then there are exactly two ways to add two lines to the four facets of the quadrilateral to get two triangles $T_1$ and $T_2$, and complete the adjoint line to a triangle $T_3$. All these triangles correspond to $3$-cycles in $\mathcal{S}_{27}^{10}$.
The canonical form of the quadrilateral is then 
$$\mathrm{Res}_X \dfrac{h_3}{fh_{1}h_2}\Omega_{\mathbb{P}^3}, $$
where $h_i$ is the linear form defining the triangle $T_i$. Both choices of lines to construct the triangles $T_1,T_2$ and $T_3$ yield the same form on $X$. For examples, see \cite[p.~12]{cubicsurfaces}.

The interpretation for pentagons is similarly combinatorial. Each pentagon corresponds to a 5-cycle in the graph $\mathcal{S}_{27}^{10}$ (the vertices in this graph are the 27 lines, and edges connect lines that intersect). There is a unique 3-cycle $H_1$ on the vertices of this 5-cycle. In this 3-cycle there is a unique edge that does not correspond to a vertex of the pentagon, denote it $e_1$. The remaining two vertices of the 5-cycle that are not in $H_1$ can be uniquely completed to a 3-cycle $H_2$. Let $A$ denote the line corresponding to the vertex added to get this 3-cycle. Now let $g$ be the linear form vanishing on $A$ and the point corresponding to the edge $e_1$, and $h_1,h_2$ be the linear forms defining the planes corresponding to $H_1,H_2$, respectively. Then the canonical form of the pentagon is 
$$
\mathrm{Res}_X\dfrac{g}{h_1h_2f}\Omega_{\mathbb{P}^3}.
$$

We compute the canonical forms of the real regions of any smooth cubic surface $X$ and the corresponding log canonical embedding. The ideal of this embedding is generated by $5586$ quadratic polynomials, which we explicitly compute. We performed this computation in \texttt{Macaulay2}. The code for this can be found in our supplementary file \texttt{CubicSurface.m2} \cite{zenodo} and allows the user to carry out this computation for any six points of interest. 

\subsection{The configuration spaces $\mathcal{X}(3,6)$, $\mathcal{Y}(3,6)$ and $\mathcal{X}(3,7)$}

The goal of this section is to explicitly describe a rational parametrization and quadratic equations of the log canonical compactification of the configuration space $\mathcal{Y}(3,6)$, as well as to offer some perspective for the spaces $\mathcal{X}(3,6)$ and $\mathcal{X}(3,7)$. We start by defining relevant terminology.

The open configuration space of $n$ labeled points in $\mathbb{P}^{k-1}$ in general position is denoted by $\mathcal{X}(k,n)$. Let $\mathrm{Gr}^\circ(k,n)$ denote the open subset of $\mathrm{Gr}(k,n)$ where all Pl\"ucker coordinates are nonzero.
We define $\mathcal{X}(k,n)$ as the quotient $\mathrm{Gr}^\circ(k,n)/(\mathbb{C}^*)^n$ by the action of the algebraic torus $(\mathbb{C}^*)^n$ scaling the columns. 
When $k=3$ and $n=6,7$, the space $\mathcal{X}(3,n)$ is closely related to the moduli space $\mathcal{Y}(3,n)$ of marked del Pezzo surfaces of degree $9-n$. 
The latter is obtained from the former by removing $\binom{n}{6}$ coconic divisors, as described in \cite[Section 1]{posdelPezzo}.

Compactifications of $\mathcal{X}(3,n)$ have been well studied, and extensive research has been done comparing them, see \cite{luxton2008thesis,corey2021initial,corey2023x38,SchafflerTevelev}. Among these are the log canonical compactification and (the normalization of) Kapranov's Chow quotient. It was shown in \cite{KeelTevelevChow} that these compactifications differ for $n \geq 9$ and conjectured that for $n = 6, 7, 8$ they agree. This conjecture was proven for $n=6$ in \cite{luxton2008thesis}, for $n =7$ in \cite[Theorem 1.3]{corey2021initial}, and for $n=8$ in \cite[Theorem 1.1]{corey2023x38}. Finally, the log canonical compactifications of $\mathcal{X}(3,6)$ and $\mathcal{X}(3,7)$ are both tropical compactifications \cite{luxton2008thesis, corey2021initial}. We will exploit this fact to compute the combinatorial rank. 

\begin{definition}
    Let $U \subset \bC^n$ be a very affine variety. Let $\Sigma \subset \bR^n$ be a fan with $|\Sigma| = \rm{Trop} \, U.$ Let $\overline{U}$ be the corresponding tropical compactification of $U,$ namely the closure of $U$ in $X_\Sigma.$ Then $U$ is called \emph{sch\"on} if, for every torus orbit $O_\sigma,$ the intersection $\overline{U} \cap O_\sigma$ is smooth. If additionally all positive-dimensional strata $\overline{U} \cap O_\sigma$ are connected, we say $U$ is \emph{sehr sch\"on}.
\end{definition}

Sch\"on-ness was first introduced in \cite{tevelev2007tori}, though we use the definition of \cite{hacking}, which is equivalent by \cite[Lemma 2.7]{hacking}. Sehr sch\"on-ness is introduced in the forthcoming PhD thesis \cite{joris}. We use the following result from \cite{joris}, which generalizes \cite[Theorem 2.5]{hacking} to fans with weights and rephrases it in the language of positive geometry.

\begin{proposition}[{\cite[Proposition 2.2.12]{joris}}]\label{prop:joris}
Let $X$
be a projective variety of dimension $n$, and $Y$ a codimension one subvariety of $X$ such that $X \setminus Y \subset \mathbb{C}^N$ is a smooth and sehr schön very affine variety. Then the combinatorial rank of $(X,Y)$ is 
\[
\operatorname{cr}(X,Y)
=
(-1)^{n-\ell}
\sum_{k\ge0} (-1)^k f_k,
\]
where $L$ is the $\ell$-dimensional lineality space of
$\mathrm{Trop}(X \setminus Y)$, and the $f_k$ are the entries of the
weighted $f$-vector of the quotient fan
$\mathrm{Trop}(X \setminus Y)/L.$
\end{proposition}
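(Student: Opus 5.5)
The plan is to reduce to the tropical (Hacking-type) computation of the Euler characteristic of $U = X\setminus Y$ and then identify this Euler characteristic with the combinatorial rank using genus zero/purity. First, since $U$ is smooth and sehr sch\"on, the tropical compactification $\overline{U}\subset X_\Sigma$ (for a unimodular refinement $\Sigma$ of $\mathrm{Trop}(U)$ after quotienting by the lineality space $L$) is smooth with simple normal crossing boundary, and its open strata $\overline{U}\cap O_\sigma$ are smooth. Each such stratum is itself very affine and isomorphic to $U_\sigma\times(\bC^*)^{\dim\sigma}$-type quotients; in particular strata corresponding to cones of $\mathrm{Trop}(U)/L$ of dimension $k$ are, up to torus factors, very affine varieties of dimension $n-\ell-k$. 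Because we may replace $\overline{U}$ by any SNC compactification without changing $\mathrm{cr}$ (invariance under modifications, as used in the proof of Theorem \ref{thm:mainnew}), it suffices to compute $\mathrm{cr}$ for this tropical compactification.

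Second, I would express $\mathrm{cr}(X,Y)=h^{0,0}(H_n(X,Y))$ in terms of Euler characteristics. For a sch\"on very affine variety, the top cohomology $H^n(U)$ is pure of weight $2n$ in the relevant sense: by \cite[Theorem 2.5]{hacking} the weight-graded pieces of $H^*(U)$ are computed from the combinatorics of $\mathrm{Trop}(U)$, and the Hodge numbers $h^{p,p}$ are read off from the fan. Dually, $h^{0,0}(H_n(X,Y))$ equals $\dim \mathrm{Gr}^W_{2n}H^n(U)$, which is $\dim\Omega^n_{\log}(U)$. Hacking's formula gives this top-weight part as $(-1)^{n}\chi$ of a combinatorial complex built from the fan; concretely, via inclusion–exclusion over strata, $\dim \mathrm{Gr}^W_{2n}H^n(U)=(-1)^{n-\ell}\sum_k(-1)^k f_k$, where $f_k$ counts (with multiplicity) the $k$-dimensional cones of $\mathrm{Trop}(U)/L$. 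The weights enter because a cone with tropical multiplicity $m$ corresponds to a stratum whose intersection with the orbit has $m$ components (before sehr sch\"on connectedness) or, equivalently after refinement, to $m$ copies; the sehr sch\"on hypothesis guarantees positive-dimensional strata are connected so that each contributes exactly the weight, while zero-dimensional strata (maximal cones) contribute their number of points, which is the multiplicity.

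Third, I would check the sign and lineality bookkeeping: the lineality space $L$ contributes a torus factor $(\bC^*)^\ell$ acting on $U$, which shifts the degree by $\ell$ and accounts for the factor $(-1)^{n-\ell}$; after quotienting, the alternating sum is the Euler characteristic with compact supports of the boundary complex, matching the $h^{0,0}$ of the relative homology via Poincaré–Lefschetz duality. The main obstacle is the second step: correctly identifying $h^{0,0}(H_n(X,Y))$ with the alternating weighted $f$-vector sum, in particular verifying that the genus-zero/sch\"on condition makes all lower-weight contributions vanish in top degree and that weights are counted correctly. I would first try to do this via the Deligne weight spectral sequence for the SNC pair $(\overline{U},\partial\overline{U})$, whose $E_1^{-k,2n}$ terms are $H^0$ of codimension-$k$ strata closures, each counted by cones of $\mathrm{Trop}(U)/L$ with their weights.
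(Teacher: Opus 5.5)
The paper gives no proof of its own; it imports the result from the thesis as a weighted version of Hacking's theorem. Your outline has the right skeleton for that route: $\mathrm{cr}(X,Y)=h^{0,0}(H_n(X,Y))=\dim\mathrm{Gr}^W_{2n}H^n(U)$, computed on an SNC tropical compactification through the weight-$2n$ row of Deligne's weight spectral sequence. But the step that turns an alternating sum into a single Hodge number is missing, and the mechanism you propose for it is the wrong one.

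That row is
\[
0\to H^0(B^{[n]})\to H^2(B^{[n-1]})\to\cdots\to H^{2n-2}(B^{[1]})\to H^{2n}(\overline{U})\to 0 .
\]
Here $E_1^{-k,2n}=H^{2n-2k}(B^{[k]})(-k)$ is the \emph{top} cohomology of the codimension-$k$ strata, not $H^0$, though its dimension is the same number of components. The row is the augmented chain complex of the boundary complex $\Delta$. By $E_2$-degeneration, its cohomology in position $-k$ is $\mathrm{Gr}^W_{2n}H^{2n-k}(U)\cong\tilde H_{k-1}(\Delta)$.

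So the weighted sum $\sum_k(-1)^kf_k$ is only the Euler characteristic of this row. For $\ell=0$ it equals $(-1)^n\dim\mathrm{Gr}^W_{2n}H^n(U)$ only if $\mathrm{Gr}^W_{2n}H^j(U)=0$ for every $j>n$. The missing idea is that this holds because a very affine variety is affine, so $H^j(U;\mathbb{Q})=0$ for $j>n$ by Andreotti--Frankel. This is exactly the vanishing of the lower homology of the (weighted) link that Hacking's theorem supplies.

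You never use affineness. Instead you locate the difficulty in making ``lower-weight contributions vanish in top degree'' via genus zero. But lower-weight parts of $H^n(U)$ never enter $h^{0,0}(H_n(X,Y))$, and genus zero is not a hypothesis of the statement. Affineness is also what justifies your lineality reduction: in the K\"unneth decomposition of $U\cong U'\times(\mathbb{C}^*)^\ell$, only the summand $H^{n-\ell}(U')\otimes H^{\ell}((\mathbb{C}^*)^\ell)$ survives in degree $n$.

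Several auxiliary assertions are false in the generality of the statement and should go.
\begin{itemize}
\item $H^n(U)$ need not be pure, and $\dim\mathrm{Gr}^W_{2n}H^n(U)=\dim\Omega^n_{\log}(U)$ holds only in genus zero. A generic plane cubic minus its nine points on the coordinate lines is sehr sch\"on, with $\mathrm{cr}=-(1-9)=8$ from three rays of weight $3$. Yet $\dim\Omega^1_{\log}(U)=9$, and $H^1(U)$ has a weight-one part.
\item An inclusion--exclusion of Euler characteristics of strata computes $\chi(U)$, not $\mathrm{cr}$. For example, $\mathbb{P}^2$ minus four general lines has $\chi(U)=1$ but $\mathrm{cr}=3$. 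The relevant Euler characteristic is that of $\Delta$.
\item The unimodular refinement interacts with the weights. If a maximal cone $\sigma$ with $m_\sigma>1$ is subdivided, the strata of the new interior cones are $m_\sigma$ disjoint tori. The refined fan is then no longer sehr sch\"on, and its naive weighted $f$-vector does not describe $\Delta$. You must either check that the $m_\sigma$ copies of the subdivided open cell contribute $m_\sigma(-1)^{\dim\sigma}$ to the alternating sum, as the unsubdivided cone does, or work directly with the toroidal compactification for the given fan.
\end{itemize}
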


The log canonical compactification of $\mathcal{Y}(3,6)$ was described in \cite{HKTdelPezzo}. It is identified with Naruki's cross-ratio variety \cite{Naruki}. This embedding was shown to be given by the canonical forms of the real regions in \cite{cubicsurfaces}. The following proposition gives an independent proof that the real combinatorial rank equals the combinatorial rank for $\mathcal{Y}(3,6)$ and $\mathcal{X}(3,6)$. Computations used in the proof are available at \cite{zenodo}.

\begin{proposition}
    The combinatorial ranks of $\mathcal{X}(3,6)$ and $\mathcal{Y}(3,6)$ are $126$ and $150,$ respectively. These equal the corresponding real combinatorial ranks. 
\end{proposition}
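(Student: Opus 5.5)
The plan splits into two parts: compute the combinatorial ranks tropically, then show that the canonical forms of real regions reach them.

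\textbf{Combinatorial ranks.} We apply Proposition \ref{prop:joris} to the log canonical compactifications of $\mathcal{X}(3,6)$ and $\mathcal{Y}(3,6)$. Both are tropical compactifications: for $\mathcal{X}(3,6)$ by \cite{luxton2008thesis}, and for $\mathcal{Y}(3,6)$ via Naruki's cross-ratio variety \cite{HKTdelPezzo}. We embed each space as a very affine variety:
\begin{itemize}
\item $\mathcal{X}(3,6)$ through the $20$ Plücker coordinates modulo the torus;
\item $\mathcal{Y}(3,6)$ additionally through the coconic polynomial.
\end{itemize}
Both spaces have dimension $n=4$. The first step is to check that each is sehr schön. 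Schön-ness follows because the boundary strata of these compactifications are smooth, which is known from the cited descriptions. Connectedness of the positive-dimensional strata is verified by inspecting the strata: for the compactification of $\mathcal{Y}(3,6)$ these are products of $\overline{\mathcal{M}}_{0,n}$-type pieces and lower-dimensional cross-ratio varieties.

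Next we compute the tropical variety. For $\mathcal{X}(3,6)$ this is $\mathrm{Trop}\,\mathrm{Gr}(3,6)$ modulo its lineality space, whose $f$-vector is known from Speyer--Sturmfels: $65$ rays, $550$ two-dimensional cones, $1395$ three-dimensional cones and $1035$ four-dimensional cones, all with weight one. For $\mathcal{Y}(3,6)$ the fan is the Naruki fan, equivalently the fan of the $E_6$ configuration: $76$ rays ($36+40$) with known higher $f$-numbers. We compute the weights and $f$-vectors in software and plug them into the alternating sum. This should give
\[
1-65+550-1395+1035=126
\]
for $\mathcal{X}(3,6)$, with the sign $(-1)^{4}$ absorbing the constant term, and $150$ for $\mathcal{Y}(3,6)$ analogously.

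\textbf{Real combinatorial ranks.} We fix a totally real configuration of six points, for instance points on a real conic perturbed off it, or a standard positive configuration. We then enumerate the real regions of $\mathcal{X}(3,6)_{\mathbb{R}}$ and $\mathcal{Y}(3,6)_{\mathbb{R}}$; these are indexed by oriented matroid / chirotope data, with the chambers further split by the coconic divisor in the $\mathcal{Y}$ case. For each region we write its canonical form explicitly. On a positive chart the canonical form is a $d\log$ of a cluster-type chart, and the other regions are obtained by the $S_6$ action and by sign flips, or by the constructions of \cite{cubicsurfaces}. We then evaluate all these canonical functions at sufficiently many random rational points and compute the rank of the resulting matrix. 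By Lemma \ref{lem:canformslogarithmic} these forms lie in $\Omega^4_{\log}$, so the rank is at most $\mathrm{cr}$. If the numerical rank equals $126$, respectively $150$, then equality holds, since rank can only drop under specialization and a rank computed at explicit points is a lower bound.

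\textbf{Main obstacle.} The hardest step is justifying the sehr schön hypothesis, in particular connectedness of the strata, together with getting the weighted $f$-vector of the $\mathcal{Y}(3,6)$ fan exactly right. The plan there is to cite the explicit boundary stratification in \cite{HKTdelPezzo} and cross-check the resulting Euler characteristic against $\chi(\mathcal{Y}(3,6))$. The lower-bound computation is routine but computational.
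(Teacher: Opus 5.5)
Your tropical half is sound, but the real-rank half has a genuine gap for $\mathcal{X}(3,6)$.

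Translating the positive region by $S_6$ only produces the generalized Parke--Taylor forms $f^{(3)}_\sigma$, and the paper notes that these span only a $60$-dimensional space. Sign changes of columns add nothing, since they lie in the torus $(\mathbb{R}^*)^6$ and act trivially on $\mathcal{X}(3,6)_{\mathbb{R}}$. The real regions come in four combinatorial types, corresponding to the four reorientation classes of uniform rank-$3$ oriented matroids on six elements. Regions of the other three types are not $S_6$-translates of the positive region, and \cite{cubicsurfaces} does not treat them: it handles the cubic surface and $\mathcal{Y}(3,6)$. So your rank computation, as described, would return $60$, not $126$.

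The missing input is what the paper imports. Every real region of $\mathcal{X}(3,6)$ is a positive geometry, and canonical forms for all four types are given in \cite[Sec.~3.3.1]{nick2024x3n}. For $\mathcal{Y}(3,6)$, positivity comes from \cite{posdelPezzo}, and the forms together with the span of dimension $150$ come from \cite[Theorem~10.5 and Section~11]{cubicsurfaces}. You need positivity in any case, because Lemma~\ref{lem:canformslogarithmic} identifies an explicit form with $\omega([\sigma])$ only when $(X,\sigma)$ is a positive geometry. Once these forms are available, your exact evaluation at rational points is the paper's ``direct computation''. Fixing a configuration of six real points plays no role here; that belongs to the cubic-surface example, not to the regions of the moduli spaces.

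For $\mathrm{cr}(\mathcal{X}(3,6))$ your argument is the paper's. The sehr sch\"on property and the weights being one should come from \cite[Theorem~6.1, Lemma~7.1]{corey2021initial}, which give smooth, irreducible strata, rather than from ``inspection''.

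For $\mathcal{Y}(3,6)$ you take a genuinely different route. The paper does not compute $\mathrm{cr}$ tropically; it takes $150$ from \cite{cubicsurfaces}. Your route does work, and it needs no software. Naruki's boundary consists of:
\begin{itemize}
\item $36$ divisors $D_\alpha\cong\overline{\mathcal{M}}_{0,6}$, indexed by the positive roots of $E_6$;
\item $40$ pairwise disjoint divisors $D_A\cong(\mathbb{P}^1)^3$, indexed by the $A_2^3$-subsystems.
\end{itemize}
Moreover, $D_\alpha\cap D_\beta\neq\emptyset$ iff $\alpha\perp\beta$, which gives $15$ such $\beta$ per root, namely the $\delta_{ij}$ on $D_\alpha$. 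Also $D_\alpha\cap D_A\neq\emptyset$ iff $\alpha\in A$, which gives $10$ such $A$ per root, namely the $\delta_{ijk}$. Counting through the boundary strata of $\overline{\mathcal{M}}_{0,6}$ ($25,105,105$ in codimension $1,2,3$) and of $(\mathbb{P}^1)^3$ ($9,27,27$) gives $f=(1,76,630,1620,1215)$. Then $1-76+630-1620+1215=150$, which is an independent proof of $\mathrm{cr}=150$ that does not use the log canonical embedding.

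Finally, drop the proposed cross-check against $\chi(\mathcal{Y}(3,6))$. The alternating sum computes $\mathrm{cr}$, not $\pm\chi$ of the open variety; already $\mathcal{M}_{0,4}$ has $\mathrm{cr}=2$ but $\chi=-1$.
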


\begin{proof}
    The real regions of $\mathcal{X}(3,6)$ are positive geometries and their canonical forms are identified in \cite[Sec. 3.3.1]{nick2024x3n}.
    A direct computation shows that these forms span a linear space of dimension $126$. This is the real combinatorial rank. The open configuration spaces $\mathcal{X}(3,6)$ and $\mathcal{X}(3,7)$ are sehr sch\"on; Corey's analysis of the initial degenerations shows that the strata are smooth and irreducible \cite[Theorem 6.1, Lemma 7.1]{corey2021initial}. Thus by Proposition \ref{prop:joris}, the combinatorial rank is given by the dimension of the homology of the tropical Grassmannian $\mathrm{Trop}(\mathrm{Gr}(3,6))$. This number was computed to be $126$ by \cite[Theorem 5.4]{tropgr}. 
    
    The real regions of $\mathcal{Y}(3,6)$ are also positive geometries \cite{posdelPezzo}.
    Their canonical forms span a space of dimension $150$ and realize the log canonical embedding, as shown in \cite[Theorem 10.5 and Section 11]{cubicsurfaces}.  We note that the computation in \cite[Section 11]{cubicsurfaces} is due to Artyom Lisitsyn. He also independently obtained the number $126$ appearing in this proposition and the number $7470$ for $\mathcal{X}(3,7)$, which we discuss below.
\end{proof}

In addition, we obtain all quadratic equations that vanish on the log canonical compactification of $\mathcal{Y}(3,6)$ and on the closure of the variety parametrized by the canonical forms of the real regions of $\mathcal{X}(3,6)$. We denote the latter (closed) variety by $\mathcal{X}_{\mathrm{cf}}(3,6)$.

\begin{proposition}
The homogeneous vanishing ideals of $\mathcal{X}_{\mathrm{cf}}(3,6)$ and of the log canonical compactification of $\mathcal{Y}(3,6)$ have $6616$ and $9605$ minimal quadratic generators respectively.  
\end{proposition}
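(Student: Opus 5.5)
The statement is computational: it asks for the dimension of the degree-two part of each vanishing ideal. The plan is to compute these by exact linear algebra over $\mathbb{Q}$ rather than by Gröbner bases. The ambient spaces are $\mathbb{P}^{125}$ and $\mathbb{P}^{149}$, with $\binom{127}{2}=8001$ and $\binom{151}{2}=11325$ quadratic monomials respectively, so a full implicitization is out of reach.

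First I fix explicit parametrizations. For $\mathcal{X}(3,6)$ I take the canonical forms of the real regions from \cite[Sec. 3.3.1]{nick2024x3n}, written in an affine chart of $\mathcal{X}(3,6)$, for instance by fixing four of the six points to a projective frame so that the remaining two points give coordinates $(a_1,\dots,a_4)$. I then extract a basis of $126$ canonical functions $N_1/D,\dots,N_{126}/D$ over a common denominator $D$, which is a product of $3\times 3$ minors. For $\mathcal{Y}(3,6)$ I do the same with the $150$ forms of \cite[Section 11]{cubicsurfaces}, whose denominators also contain the coconic polynomial. By the preceding proposition both lists span spaces of the stated dimensions. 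For $\mathcal{Y}(3,6)$, Theorem \ref{thm:mainnew} together with very ampleness of the log canonical bundle on Naruki's variety \cite{HKTdelPezzo} shows that the closure of the image is exactly the log canonical compactification. For $\mathcal{X}(3,6)$ the statement concerns $\mathcal{X}_{\mathrm{cf}}(3,6)$ by definition, so no such identification is needed.

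Next, a quadric $Q=\sum c_{ij}z_iz_j$ vanishes on the image closure if and only if $\sum c_{ij}N_iN_j\equiv 0$ as a polynomial in $a$. So the space of quadrics in the ideal is the kernel of the linear map from $\mathrm{Sym}^2$ of the span of the $N_i$ to polynomials. To avoid expanding the products symbolically, I evaluate: I sample $M$ random rational points $a^{(k)}$ in the chart with $M$ somewhat larger than $\binom{s+1}{2}$, where $s$ is $126$ or $150$. I form the $M\times\binom{s+1}{2}$ matrix with entries $N_i(a^{(k)})N_j(a^{(k)})$ and compute its rank exactly over $\mathbb{Q}$, or over a large prime field and then lift or confirm. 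The kernel dimension should be $8001-1385=6616$ and $11325-1720=9605$. Equivalently, the Hilbert function of the image in degree two should be $1385$ and $1720$.

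Since the ideal is homogeneous and contains no linear forms (the $N_i$ are linearly independent), every degree-two element is a minimal generator. So the number of minimal quadratic generators equals this kernel dimension, and this is the claim.

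The main obstacle is certifying the rank rather than merely sampling it. Random evaluation only gives a lower bound on the rank, which is an upper bound on the number of quadrics; a rank deficiency caused by unlucky points would overcount. For the opposite inequality I would verify each computed kernel vector exactly. Writing each $N_i$ in terms of the minors, I would substitute symbolically and check that every kernel quadric vanishes identically, or check that the quadric vanishes at more points than its degree in $a$ allows for a nonzero polynomial, in the style of Schwartz–Zippel. Combined with a full-rank evaluation submatrix of size $1385$ (respectively $1720$), exhibited by an explicit nonzero minor computed modulo a prime and lifted, this pins the dimension exactly. A second practical obstacle is the size of the canonical functions for $\mathcal{Y}(3,6)$. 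I would keep them factored and evaluate the factors numerically at each point rather than expanding them.
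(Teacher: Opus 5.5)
Your proposal is correct and matches the paper, whose only justification is the explicit computation of the degree-two part of each ideal from the canonical-form parametrizations of the preceding proposition (the resulting quadrics are deposited in the supplementary materials); your observation that $I_1=0$, so that $\dim I_2$ counts the minimal quadratic generators, and your two-sided rank certification turn that computation into a proof with more detail than the paper spells out. Two small caveats: the paper takes the identification of the image closure with the log canonical embedding of $\mathcal{Y}(3,6)$ from \cite[Theorem 10.5 and Section 11]{cubicsurfaces}, not from very ampleness in \cite{HKTdelPezzo}, and a pointwise test certifies $\sum c_{ij}N_iN_j\equiv 0$ only when the points form a grid (for instance all of $S^4$ with $|S|$ larger than the degree), since a nonzero polynomial in four variables can have infinitely many zeros.
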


As far as we are aware, this is the first explicit description of the quadratics which vanish on these two varieties in the literature. As in previous examples, the generators we computed can be found in our supplementary materials \cite{zenodo}. We also checked that the dimension of each of these ideals is four. This immediately implies that the full ideal of each embedding is a minimal prime of its respective ideal of quadratics and suggests the following conjecture. Verifying this conjecture is unfortunately out of the reach of current computational methods.   
\begin{conjecture}
The vanishing ideals of $\mathcal{X}_{\mathrm{cf}}(3,6)$ and of the log canonical compactification of $\mathcal{Y}(3,6)\subset \mathbb{P}^{149}$ are quadratically generated. 
\end{conjecture}

In addition, we checked that $\mathcal{X}_{\mathrm{cf}}(3,6)$ is birational to  $\mathcal{X}(3,6)$. By Theorem \ref{thm:mainnew}, to show that $\mathcal{X}_{\mathrm{cf}}(3,6)$ is the log canonical compactification of $\mathcal{X}(3,6)$, it would suffice to show that the log canonical bundle is very ample. To our knowledge, this is an open question.

We now turn to the configuration space $\mathcal{X}(3,7)$. Its real regions are positive geometries and their canonical forms were also computed in \cite[Sec. 3.3.2]{nick2024x3n}. To our knowledge, it is currently not known if the canonical forms of the real regions of the configuration space $\mathcal{X}(3,7)$ realize its log canonical embedding. The discussion above suggests this is the case if the log canonical bundle of $\mathcal{X}(3,7)$ is very ample. However, we checked numerically that the canonical forms still define at least a birational map to $\mathbb{P}^{7469}$. The dimension of the linear space spanned by the canonical forms is equal to $7470$ and again equals the dimension of the homology of the tropical Grassmannian $\mathrm{Trop}(\mathrm{Gr}(3,7))$, computed in \cite[Theorem 2.1]{drawingplanes2009}. The moduli space of marked del Pezzo surfaces $\mathcal{Y}(3,7)$ is even less studied: it is currently unknown if all of its regions are positive geometries and what their canonical forms are. Thus, studying the canonical forms and equations for log canonical embeddings of $\mathcal{X}(3,7)$ and $\mathcal{Y}(3,7)$ are interesting questions for future research.

\subsection{$\cm_{0,n}$ and Parke--Taylor Varieties}
In this section we connect the \emph{Parke--Taylor variety} introduced in \cite{parketaylorvar} with our embedding via canonical forms of the configuration spaces described in the previous section. We begin with a brief overview of the Parke--Taylor variety.

Let $\sigma \in S_n$ be a permutation of $[n]$. The Parke--Taylor function associated with $\sigma$ is the rational function
\[
f_\sigma = \frac{1}{\prod_{i = 1}^n p_{\sigma_i \sigma_{i+1}}},
\]
where $p_{ij}$ are the Pl\"ucker coordinates on (the affine cone over) the Grassmannian $\gr(2,n)$ and the indices of $\sigma$ are taken modulo $n$. These functions appear in the Parke--Taylor formula for maximally helicity violating amplitudes \cite{Parke:1986gb} in quantum field theory. In \cite{parketaylorvar} the first two authors of this paper introduced the Parke--Taylor variety $\pt_n$ as the closure of the map
\begin{align*}
\varphi_n: \gr^\circ(2,n) &\to\pp^{n!-1}, \\
            p &\mapsto (f_\sigma(p) : \sigma \in S_n).
\end{align*}
The \emph{open Parke--Taylor variety} is $\pt_n^\circ = \image(\varphi_n)$ and $\pt_n$ is its Zariski closure. Two main results of \cite{parketaylorvar} relate $\pt_n$ to the moduli space $\mathcal{M}_{0,n}$ of $n$ distinct points in $\pp^1$ up to projective automorphisms. Namely, they show that $\pt_n^\circ \cong \cm_{0,n}$ and, more surprisingly, that $\pt_n$ is Keel and Tevelev's log canonical embedding of the Deligne--Knudsen--Mumford compactification $\overline{\cm}_{0,n}$ from \cite{KeelTevelevM0n}. The following result shows that these results of \cite{parketaylorvar} are simply an instance of Theorem \ref{thm:mainnew}. 

\begin{theorem}
The Parke--Taylor variety $\pt_n$ is the log canonical compactification of $\cm_{0, n}$. 
\end{theorem}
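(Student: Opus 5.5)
We will deduce the statement from Theorem \ref{thm:mainnew}, applied to $U = \cm_{0,n}$. Take the compactification $X = \overline{\cm}_{0,n}$ with boundary $Y$ the union of the boundary divisors $D_{IJ}$. This is an SNC compactification with all strata products of smaller $\overline{\cm}_{0,k}$'s. Alternatively, use the model $\cm_{0,n} \cong \bP^{n-3}\setminus\mathcal{A}$, where $\mathcal{A}$ is the braid arrangement obtained by fixing three points at $0,1,\infty$.

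The proof has three inputs.

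\begin{enumerate}
\item \textbf{Existence and very ampleness.} By Keel--Tevelev \cite{KeelTevelevM0n}, $\overline{\cm}_{0,n}$ with its boundary is the log canonical compactification of $\cm_{0,n}$, and $K+D$ is very ample. The boundary is SNC, so the pair is log canonical.

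\item \textbf{Genus zero.} Since $\cm_{0,n}$ is a complement of a hyperplane arrangement, the pair has genus zero by \cite[Remark 2.8]{brown-dupont}. It is also a positive arrangement.

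\item \textbf{Real combinatorial rank.} This is the step requiring an actual argument, and the one we expect to be the main obstacle. The real regions of $\cm_{0,n}(\bR)$ are the $(n-1)!/2$ associahedra, indexed by dihedral orderings $\sigma$. We must show that their canonical forms are, up to sign and the standard top form on $\gr(2,n)/(\bC^*)^n$, exactly the Parke--Taylor functions $f_\sigma$. This is the known fact that the canonical form of the associahedral region of $\cm_{0,n}(\bR)$ with ordering $\sigma$ is $\prod_i p_{\sigma_i\sigma_{i+1}}^{-1}$ times the $\mathrm{PGL}_2$-quotient volume form. We verify it via the recursive residue characterization of Lemma \ref{lem:canformslogarithmic}. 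The residue of the Parke--Taylor form at a boundary divisor $D_{IJ}$, where $I$ is a cyclic interval of $\sigma$, factors as a product of two smaller Parke--Taylor forms. Its residue vanishes at non-facet divisors, since there the form has no pole. Uniqueness in Lemma \ref{lem:canformslogarithmic} then identifies the two forms.
\end{enumerate}

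To get $\mathrm{cr}_\bR = \mathrm{cr}$, we use Theorem \ref{thm:hyperplanearr} in the braid arrangement chart. The bounded regions in a generic chart are among these associahedra, and their canonical forms span $\Omega^{n-3}_{\log}(\cm_{0,n})$. Hence the span of all $f_\sigma$ equals $\Omega_{\log}^{n-3}$. Duplicates of $f_\sigma$ (reversal of $\sigma$, cyclic shifts) only repeat coordinates, and these are linear relations. Consequently, the Zariski closure of the map $\varphi_n$ agrees, up to a linear change of coordinates and a linear embedding, with the closure of $\phi_{\rm can}$ into $\bP\Omega^{n-3}_{\log}(U)^\vee$.

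Finally, we apply Theorem \ref{thm:mainnew}(2): the canonical forms of the real regions realize the log canonical embedding. Therefore $\pt_n = \overline{\varphi_n(\gr^\circ(2,n))}$ is the log canonical compactification. The map $\varphi_n$ factors through the quotient $\gr^\circ(2,n)\to\cm_{0,n}$. This is the needed compatibility: the $f_\sigma$ all have the same torus weight, so $\varphi_n$ is torus-invariant.
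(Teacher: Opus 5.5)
Your proposal is correct and takes essentially the same route as the paper. Both verify the hypotheses of Theorem~\ref{thm:mainnew}: Keel--Tevelev's very ampleness of $K+D$, genus zero, the identification of the Parke--Taylor forms with the canonical forms of the real regions, and $\mathrm{cr}_\mathbb{R}=\mathrm{cr}$ coming from the (modified) real braid arrangement. The only difference is that you prove the identification by a residue induction and derive $\mathrm{cr}_\mathbb{R}=\mathrm{cr}$ explicitly from Theorem~\ref{thm:hyperplanearr}, whereas the paper simply cites \cite{wondertopes} and the hyperplane-arrangement structure.
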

\begin{proof}
Let $D$ denote the boundary of the Deligne--Knudsen--Mumford compactification $ \overline{\cm}_{0,n}$. The Parke--Taylor forms are the canonical forms of the regions of $(\overline{\cm}_{0,n}, D)$ \cite[Sec. 5]{wondertopes}.
The log canonical ring is Koszul and thus generated in degree one by \cite[Theorem 1.2]{KeelTevelevM0n}, and the image is projectively normal, so the log canonical bundle is very ample \cite[Exercise 5.14(d)]{Hartshorne}. We have $g(\overline{\cm}_{0,n}, D)=0$ by \cite[Sec. 6.8]{brown-dupont}. Moreover, $(\overline{\cm}_{0,n}, D)$ is a modification of a real hyperplane arrangement, thus $\mathrm{cr}_\mathbb{R}(\overline{\cm}_{0,n},D)=\mathrm{cr}(\overline{\cm}_{0,n},D)$. Thus Theorem \ref{thm:mainnew} applies.
\end{proof}

One may ask whether this result extends to the configuration spaces $\mathcal{X}(3,n)$ from the previous subsection. Consider the \emph{generalized Parke--Taylor form}
\[
f_\sigma^{(k)} = \frac{1}{\prod_{i = 1}^n p_{\sigma_i \sigma_{i+1} \ldots \sigma_{i+k-1}}}
\]
which is well-defined on $\gr^\circ(k,n)$ and thus on $\mathcal{X}(k,n)$. 
For $k=3$ and $n=6$ this form appears in the list of canonical forms used in our embedding of $\mathcal{X}(3,6)$. These functions appear naturally in the theory of \emph{CEGM amplitudes} \cite{nick2024x3n}. However, as we saw in the previous subsection, these forms do not suffice to obtain the full log canonical embedding. In particular, the other three types of forms utilized in the embedding of $\mathcal{X}(3,6)$ are not in the span of the generalized Parke--Taylor forms. Indeed, in this case the real combinatorial rank is $126$ but there are only $60$ linearly independent Parke--Taylor forms. 
Despite this, it is natural to ask if the Parke--Taylor forms still yield a birational model for  $\mathcal{X}(3,n)$. Unfortunately, this is already false for $k = 3$ and $n = 6$. The map $\varphi_{3, 6}$ given by
\begin{align*}
\varphi_{3, 6}: \gr^\circ(3,6) &\to \pp^{6!-1}, \\
            p &\mapsto (f_\sigma^{(3)} : \sigma \in S_6)
\end{align*}
is not injective when restricted to $\mathcal{X}(3, 6)$. A direct computation with many points reveals that the generic fiber in this case has cardinality $2$. An example of two points whose generalized Parke--Taylor forms all agree can be found in \cite[Example 6.4]{segre}. 

\medskip
\noindent\textbf{AI Use.} 
We used GPT-5.6 Sol and Claude Opus 5 to assist in finding and parsing prior work. We used Claude Opus 5 to generate a mock review which caught many typos and notational inconsistencies, and fixed a gap in the proof of Lemma \ref{lem:canformslogarithmic}. We also used Claude Sonnet 5 to assist in writing \texttt{HyperplaneArrangements.jl}. We take full responsibility for the content of the paper and supplementary materials.

\medskip
\noindent\textbf{Acknowledgments.} 
The authors would like to thank Olivia Dumitrescu, Joris Koefler, Arne Kuhrs, Artyom Lisitsyn, Rainer Sinn, and Bernd Sturmfels for helpful discussions. They also thank Jenia Tevelev for pointers to the relevant literature. 

\smallskip

This research is funded by the European Union (ERC, UNIVERSE PLUS, 101118787). Views and opinions expressed are, however, those of the authors only and do not necessarily reflect those of the European Union or the European Research Council Executive Agency. Neither the European Union nor the granting authority can be held responsible for them. The third author was supported during this research by NSF graduate research fellowship no. 2023358166 and the Chancellor's Postdoctoral Fellowship at University of California, Davis.

\bibliographystyle{plain}
\bibliography{literature}
Institute\end{document}